\newtheorem{theorem}{Theorem}[section]
\newtheorem{lemma}{Lemma}[section]
\newcommand{\N}{\mathbb{N}}
\newcommand{\Z}{\mathbb{Z}}
\newcommand{\weakc}{\rightharpoonup}
\newcommand{\R}{\mathbb{R}}
\newcommand{\C}{\mathbb{C}}
\newcommand{\dnu}{\partial_\nu}
\newcommand{\grad}{\nabla}
\begin{document}
\setlength{\parskip}{1mm}
\setlength{\oddsidemargin}{0.1in}
\setlength{\evensidemargin}{0.1in}
\lhead{}
\rhead{}
\rfoot{}
\lfoot{}

{\bf  \Large \noindent Detecting inclusions with a generalized impedance condition from electrostatic data via sampling}

\begin{center}
Isaac Harris  \\
Department of Mathematics\\
Purdue University \\
West Lafayette, IN 47907\\
E-mail: harri814@purdue.edu
\end{center}

\begin{abstract}
\noindent In this paper, we derive a Sampling Method to solve the inverse shape problem of recovering an inclusion with a generalized impedance condition from electrostatic Cauchy data. The generalized impedance condition is a second order differential operator applied to the boundary of the inclusion. We assume that the Dirichlet-to-Neumann mapping is given from measuring the current on the outer boundary from an imposed voltage. A simple numerical example is given to show the effectiveness of the proposed inversion method for recovering the inclusion. We also consider the inverse impedance problem of determining the material parameters from the Dirichlet-to-Neumann mapping assuming the inclusion has been reconstructed where uniqueness for the reconstruction of the coefficients is proven.
\end{abstract}

{\bf \noindent Keywords}:  sampling methods, inverse boundary value problems, shape reconstruction, second order boundary condition.\\
{\bf \noindent AMS subject classifications:} 35J05, 31A25, 78A30

%%%%%%%%%%%%%%%%%%%%%%%%%%%%%%%%%%%%%%%%%%%%%%%%%%%%%%%%%%%%%%%%%%%%%%%%
\section{Introduction}\label{intro}
In this paper, we consider an inverse boundary value problem in electrostatic imaging. We propose using a {\color{black}Sampling (also known as qualitative) Method} to detect an inclusion with a generalized impedance boundary condition. Using the voltage and current measurements on the exterior boundary we will derive an algorithm for recovering the inclusions with little to no a prior information about the inclusion, which is one of the strengths of Sampling Methods. This means that one does not need to know the number of inclusions or have any estimate for the coefficients. These methods allow the user to reconstruct regions by deriving an `indicator' function from the measured data. This idea was first introduced in \cite{CK}. In particular, we will use that knowledge of the Dirichlet-to-Neumann mapping for Laplace's equation in a domain with inclusions. We assume that these subregions are impenetrable, where the electrostatic potential satisfies a generalized impedance boundary condition on the boundary of the inclusions. The electrical impedance tomography problem of visualizing/recovering the defective subregions from boundary measurements has many applications. 

The generalized impedance boundary condition can be used to model complex features such as coating and corrosion. The analysis for recovering a obstacle with a more general class of boundary conditions has been studied in \cite{FM-GIBC} where the Factorization Method was used to solve the inverse shape problem. In \cite{delamination} a generalized impedance condition is derived to asymptotically describe  delamination. Therefore, our method can be used to detect complex regions in electrostatic imaging. We will consider the inverse shape and inverse impedance problems. Our method for solving the inverse shape problem will be to recover the boundary via a Sampling Method that is of similar flavor to the work done in \cite{FM-EIT}. See monograph \cite{CCbook,CCMbook,kirschbook} and the references therein for the application of Sampling Methods to acoustic and electromagnetic scattering. Sampling Methods recover unknown obstacles by considering an ill-posed problem that involves the data operator and a singular solution to the background equation (i.e. without an inclusion/obstacle). 
The authors of \cite{LSM-imp-inclusion} used the {\color{black} Linear Sampling Method} to recover an impenetrable subregion of an inhomogeneous media using far field data. Recently these methods have been extended to problems in the time domain. In \cite{MUSIC-wave} a MUSIC-type algorithm is derived to recover small obstacles using reduced time domain data. 
Assuming that the boundary of the inclusion is known, we then turn our attention to the inverse impedance problems of recovering the coefficients from the Dirichlet-to-Neumann mapping. To this end, we prove that real and {\color{black}complex} valued coefficients can be uniquely determined from the {\color{black}knowledge of the} Dirichlet-to-Neumann mapping. In our analysis we can reduce the regularity needed for uniqueness in previous works \cite{CK-gibc,CK-GIBC} but we must assume that we have an infinite set of measurements. {\color{black}We also consider the case where the impedance parameters are complex valued which can not be handled with the analysis given in \cite{CK-gibc,CK-GIBC}.} Since iterative methods normally require an initial guest that is sufficiently close to the actual coefficient to prove convergence as well as the high sensitivity of reconstructing the Laplace-Beltrami coefficient we wish to derive a direct algorithm to recover the boundary coefficients. Here we propose a combination of data completion to recover that Cauchy data on the boundary of the inclusion and a linear system of equations derived from the generalized impedance boundary condition to recover the coefficients.

The rest of the sections of this paper are structured as follows. In Section \ref{direct-problem} we rigorously formulate the direct and inverse problem under consideration. We will {\color{black}use} a variational method to prove well-posedness for $L^{\infty}$ coefficients and derive the appropriate functional setting of the inverse problem {\color{black} where the real and imaginary parts of the coefficients are positive}. In Section \ref{inverse-shape} we will analyze the so-called `Current-Gap Operator' to derive an appropriate Sampling Method to recover the inclusion. {\color{black} The Sampling Method here is the well known Factorization Method where we prove that it satisfies operator bounds that give the result.} In Section \ref{inverse-impedance} we discuss the uniqueness of recovering coefficients using the Dirichlet-to-Neumann mapping for either real or complex-valued coefficients. {\color{black}Finally, we give a brief summary and conclusion of the results in Section \ref{end}.}

%%%%%%%%%%%%%%%%%%%%%%%%%%%%%%%%%%%%%%%%%%%%%%%%%%%%%%%%%%%%%%%%%%%%%%%%
\section{The Direct and Inverse Problem}\label{direct-problem}
We begin by considering the direct problem associated with the electrostatic imaging of an impenetrable inclusion with a generalized impedance condition. Assume that $D \subset \R^2$ is a simply connected open set with $C^2$-boundary $\Gamma_\text{1}$ with unit outward normal $\nu$. 
Now let $D_0 \subset D$ be (possible multiple) connected  open set with $C^2$-boundary $\Gamma_\text{0}$, where we assume that $\text{dist}(\Gamma_\text{1} , \overline {D}_0)\geq d>0.$ 
Now for the defective material with the impenetrable  inclusion, we define $u$ as the solution to 
\begin{align}
\Delta u=0 \quad \text{in} \quad  D_1= D \setminus \overline{D}_0  \quad \text{with} \quad u  \big|_{\Gamma_\text{1}}= f \quad \text{and} \quad \mathscr{B}(u) \big|_{\Gamma_\text{0}}=0. \label{defective}
\end{align} 
for a given $f \in H^{1/2}(\Gamma_\text{1})$. Here the function $u$ is the electrostatic potential for the defective material and the boundary operator 
\begin{align}
\mathscr{B} ( u ) = \partial_{\nu}  u -   \frac{ \text{d} }{\text{d} s}  {\eta}   \frac{\text{d}  }{\text{d} s}  u +  {\gamma} u   \label{GIBC}
\end{align}
where ${\text{d} }/{\text{d} s}$ is the tangential derivative and $s$ is the arc-length. 
Here we take $\nu$ to be the unit outward normal to the domain $D_1$ and $ \nu \cdot \grad = \partial_\nu $ is the corresponding normal derivative, see Figure \ref{dp-pic}. In the $\R^3$ case the operator $ \frac{\text{d} }{\text{d} s}  {\eta}   \frac{\text{d} }{\text{d} s}$ is replaced by the Laplace-Beltrami operator $\text{div}_{\Gamma_\text{0}} \big(\eta \text{ grad}_{\Gamma_\text{0}} \big)$.  The analysis in Sections \ref{direct-problem} and \ref{inverse-shape} holds in either $\R^2$ and $\R^3$. {\color{black} The uniqueness result in Section \ref{inverse-shape} only holds in $\R^2$ but the algorithm described for recovering the coefficients is also valid in 3-dimension.}   
\begin{figure}[h]
\centering
\includegraphics[scale=0.34]{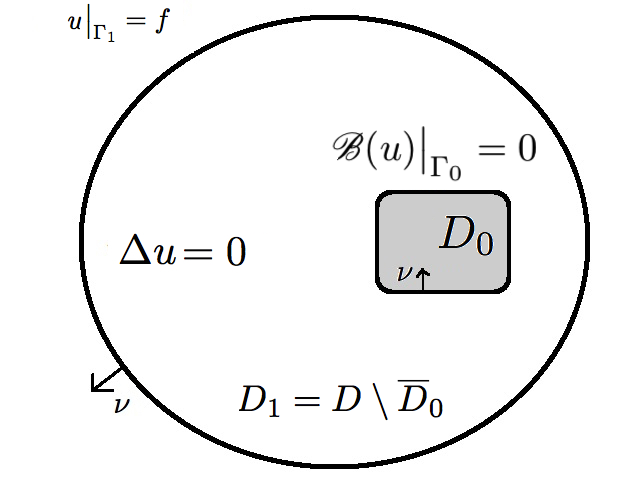}
\caption{ The electrostatic problem for a material with an inclusion. }
\label{dp-pic} 
\end{figure}

In \cite{CK-GIBC} a boundary integral equation method is used to prove the well-posedness of \eqref{defective}$-$\eqref{GIBC} but in their investigation the authors assume that the Dirichlet data $f\in H^{3/2}(\Gamma_1)$ {\color{black} so} that the solution is an $H^2(D_1)$ function. The authors require the impedance parameters to be smooth functions where as we will employ a variational technique which requires less regularity. Now assume that the coefficient $\eta \in L^{\infty }(\Gamma_{0})$ and $\gamma \in L^{\infty }(\Gamma_{0})$. For analytical considerations for the well-posedness of the direct problem {\color{black}and the coming analysis of the inverse problem} throughout the paper we also assume {\color{black}(unless stated otherwise)} that real-parts of the coefficients satisfy 
$$\text{Re}(\eta) \geq \eta_{\text{min}} >0 \quad \text{ and } \quad \text{Re}(\gamma) \geq \gamma_{\text{min}} >0$$
where as the imaginary-parts satisfy 
$${\color{black} \text{Im}(\eta)  > 0 \quad \text{ and } \quad \text{Im}(\gamma) > 0}$$
for almost every $x \in \Gamma_{0}$. Due to the generalized impedance condition \eqref{GIBC} we consider finding 
$u \in \widetilde{H}^1 ( D_1 ) $ that is the solution to \eqref{defective}$-$\eqref{GIBC} for a given $f \in H^{1/2}(\Gamma_\text{1})$. We now define the Hilbert space where we attempt to find the solution as 
$$ \widetilde{H}^1 (D_1) =  \Big\{ u\in {H}^1 (D_1) \quad \text{such that} \quad u \big|_{\Gamma_\text{0}} \in H^1 (\Gamma_\text{0})  \Big\}$$
equipped with the norm
$$ \| \varphi  \|^2_{\widetilde{H}^1( D_1) }= \| \varphi  \|^2_{H^1( D_1) }  +\left\| {\varphi}  \right\|^2_{H^1(\Gamma_\text{0})} $$
and it's corresponding inner-product. 
The boundary condition $\mathscr{B} ( u ) =0$ on $\Gamma_0$ is understood in the weak sense such that 
$$  0 = \int\limits_{\Gamma_{0} } \overline{\varphi} \partial_{\nu} u + \eta\,   \frac{\text{d}  u}{\text{d} s}   \frac{\text{d}   \overline{\varphi} }{\text{d} s} + \gamma  \, u  \overline{\varphi} \, \text{d} s \quad \text{ for all } \quad \varphi \in H^1(\Gamma_0).$$
Notice, that if $f=0$ letting $\varphi=u$ in the line above and applying Green's 1st identity implies that  
$$  \int\limits_{ D_1} | \grad u|^2  \, \text{d} x = - \int\limits_{\Gamma_\text{0} } \eta \left| \frac{\text{d}  u}{\text{d} s} \right|^2 \, \text{d} s  - \int\limits_{\Gamma_\text{0} } \gamma |u|^2 \, \text{d} s.$$
Taking the real part of the above equation gives that $u = 0$ in $D_1$. Since the boundary operator $\mathscr{B}$ is linear we can conclude that \eqref{defective}$-$\eqref{GIBC} has at most one solution.

\begin{lemma}
There exists at most one solution to \eqref{defective}$-$\eqref{GIBC} in $\widetilde{H}^1 (D_1)$. 
\end{lemma}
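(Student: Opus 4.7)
The plan is exactly the argument that the preceding paragraph of the excerpt sketches, made self-contained. By linearity of the problem \eqref{defective}--\eqref{GIBC}, it suffices to show that if $f=0$ then any $u\in\widetilde{H}^1(D_1)$ solving \eqref{defective}--\eqref{GIBC} must vanish identically, since the difference of any two solutions satisfies the problem with zero Dirichlet data.

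First I would apply Green's first identity to $u$ in $D_1$ using $\Delta u = 0$:
$$ \int_{D_1} |\nabla u|^2 \, \text{d}x = \int_{\Gamma_1} \overline{u}\, \partial_\nu u \, \text{d}s - \int_{\Gamma_0} \overline{u}\, \partial_\nu u \, \text{d}s,$$
where the minus sign on $\Gamma_0$ reflects that $\nu$ is the outward normal to $D_1$, which points into $D_0$ on $\Gamma_0$. Since $u|_{\Gamma_1}=f=0$, the first boundary integral drops out. To handle the remaining term, I would use the weak form of the generalized impedance condition given in the excerpt with test function $\varphi = u \in H^1(\Gamma_0)$ (admissible because $u\in \widetilde{H}^1(D_1)$), giving
$$ \int_{\Gamma_0} \overline{u}\, \partial_\nu u \, \text{d}s = - \int_{\Gamma_0} \eta \left| \frac{\text{d} u}{\text{d} s}\right|^2 \, \text{d}s - \int_{\Gamma_0} \gamma |u|^2 \, \text{d}s.$$

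Substituting this into Green's identity yields
$$\int_{D_1} |\nabla u|^2 \, \text{d}x + \int_{\Gamma_0} \eta \left| \frac{\text{d} u}{\text{d} s}\right|^2 \, \text{d}s + \int_{\Gamma_0} \gamma |u|^2 \, \text{d}s = 0.$$
Taking real parts and invoking the standing sign assumptions $\text{Re}(\eta)\ge \eta_{\min}>0$ and $\text{Re}(\gamma)\ge \gamma_{\min}>0$ a.e.\ on $\Gamma_0$, every term is nonnegative, so in particular $\nabla u = 0$ in $D_1$ and $u = 0$ on $\Gamma_0$. Thus $u$ is constant on each connected component of $D_1$; combined with $u=0$ on $\Gamma_1$ (and the assumption that $D_1$ is connected through $\Gamma_1$), this forces $u \equiv 0$ in $D_1$.

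The only subtle point is the bookkeeping of the normal direction on $\Gamma_0$ when applying Green's identity — the sign must be consistent with the convention used when interpreting $\mathscr{B}(u)=0$ weakly — but once this is aligned the conclusion is immediate from the positivity of $\text{Re}(\eta)$ and $\text{Re}(\gamma)$. No regularity beyond the weak framework of $\widetilde{H}^1(D_1)$ with $L^\infty$ impedance coefficients is needed.
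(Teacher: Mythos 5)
Your argument is essentially the paper's own proof: reduce to $f=0$ by linearity, test the weak form of $\mathscr{B}(u)=0$ on $\Gamma_0$ with $\varphi=u$, combine with Green's first identity, and take real parts using $\text{Re}(\eta)\ge\eta_{\text{min}}>0$, $\text{Re}(\gamma)\ge\gamma_{\text{min}}>0$. The one slip is the sign you flag yourself: the paper takes $\nu$ to be the outward normal to $D_1$ on \emph{both} $\Gamma_1$ and $\Gamma_0$, and that same convention is used in the weak formulation you quote; with it, Green's identity reads $\int_{D_1}|\grad u|^2\,\text{d}x=\int_{\Gamma_1}\overline{u}\,\partial_\nu u\,\text{d}s+\int_{\Gamma_0}\overline{u}\,\partial_\nu u\,\text{d}s$, i.e.\ with a plus sign on the $\Gamma_0$ term, not the minus sign in your display. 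As written, your two intermediate identities combine to give $\int_{D_1}|\grad u|^2\,\text{d}x=+\int_{\Gamma_0}\eta\,|\text{d}u/\text{d}s|^2+\gamma|u|^2\,\text{d}s$, from which taking real parts yields nothing (both sides nonnegative); with the corrected plus sign the substitution produces exactly the final identity you state, namely $\int_{D_1}|\grad u|^2\,\text{d}x+\int_{\Gamma_0}\eta\,|\text{d}u/\text{d}s|^2+\gamma|u|^2\,\text{d}s=0$, and the rest of your conclusion ($\grad u=0$ in $D_1$, $u=0$ on $\Gamma_0$, hence $u\equiv 0$ using $u|_{\Gamma_1}=0$ and connectedness of $D_1$) is correct and matches the paper, which then invokes linearity of $\mathscr{B}$ exactly as you do.
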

 
 Given this we now wish to show that the boundary value problem \eqref{defective}$-$\eqref{GIBC} is well posed for any $f \in H^{1/2}(\Gamma_\text{1}).$ To this end, let ${\color{black} u_0} \in H^1(D)$ be the harmonic lifting of the Dirichlet data  where ${\color{black} u_0} = f$ on $\Gamma_\text{1}$ and $\Delta {\color{black} u_0} = 0 $ in $D$. Therefore, by interior elliptic regularity (see for e.g. \cite{evans}) we have that ${\color{black} u_0} \in H^2_{loc}(D)$ which implies that  
\begin{align*}
\|{\color{black} u_0}\|_{H^1(\Gamma_\text{0})} \leq \|{\color{black} u_0}\|_{H^{3/2}(\Gamma_\text{0})} &\leq C \| {\color{black} u_0} \|_{H^2( D_0) }  & \text{Trace Theorem}  \\
													&\leq C \| {\color{black} u_0} \|_{H^1( D ) } & \text{Elliptic Regularity}  \\	
													&\leq C \| f \|_{H^{1/2}(\Gamma_\text{1}) } & \text{Well-Posedness}
\end{align*}
where we have used the continuity of the embedding from $H^{3/2}(\Gamma_\text{0})$ into $H^1(\Gamma_\text{0})$. We now make the ansatz that the solution can be written as $u= v + {\color{black} u_0} \big|_{D_1 }$ with the function $v \in  \widetilde{H}_0^1 (D_1 ,\Gamma_1)$ where we define the space as 
 $$ \widetilde{H}_0^1 (D_1 , \Gamma_\text{1}) =  \Big\{ u\in \widetilde{H}^1 (D_1) \quad \text{such that} \quad u \big|_{ \Gamma_\text{1} } = 0 \Big\}$$
with the same norm as $\widetilde{H}^1 (D_1)$. Now multiplying Laplace's equation by a  test function $\varphi \in \widetilde{H}_0^1 (D_1 ,\Gamma_1)$ and applying integration by parts gives that $v$ solves the variational problem 
\begin{align}
A(v , \varphi ) = -A({\color{black} u_0} , \varphi) \quad \text{ for all } \quad \varphi  \in \widetilde{H}_0^1 (D_1,\Gamma_\text{1}) \label{varform-solu}
\end{align}
where the sequilinear form $A( \cdot \, ,  \cdot ) : \widetilde{H}_0^1 (D_1,\Gamma_\text{1}) \times \widetilde{H}_0^1 (D_1,\Gamma_\text{1}) \longmapsto \C$ is given by 
$$ A(v , \varphi ) =   \int\limits_{ D_1 }  \grad v \cdot \grad \overline{\varphi}  \, \text{d} x + \int\limits_{\Gamma_{0} } \eta\,   \frac{\text{d}  v}{\text{d} s}   \frac{\text{d}  \overline{\varphi}}{\text{d} s}  \, \text{d} s   +  \int\limits_{\Gamma_{0} } \gamma  \, v  \overline{\varphi} \, \text{d} s.$$
It is clear that the sequilinear form is bounded and we have that 
$$ | A( v , v ) | \geq \min \{ 1,\eta_{\text{min}}, \gamma_{\text{min}} \} \, \Big\{ \| \grad v \|^2_{L^2( D_1 ) }  + \left\| v \right\|^2_{H^1 ( \Gamma_{0} )} \Big\}.$$
Since $\widetilde{H}_0^1 (D_1 , \Gamma_\text{1})$ has the Poincar\`{e} inequality due to the zero boundary condition on $\Gamma_1$ we can conclude that $A( \cdot \, ,  \cdot )$ is coercive. By the Lax-Milgram Lemma there is a unique solution $v$ to \eqref{varform-solu} satisfying 
 $$\|v\|_{\widetilde{H}^1( D_1 ) } \leq C \Big\{ \| {\color{black} u_0}  \|_{H^1( D_1) }  +\left\| {\color{black} u_0}  \right\|_{H^1(\Gamma_\text{0})} \Big\} \leq C \| f \|_{H^{1/2}(\Gamma_\text{1}) } $$ 
 where we have used that the sequilinear form is bounded and the regularity estimate for ${\color{black} u_0}$. 
The above analysis gives the following result.  

\begin{theorem}\label{well-posed}
The solution operator corresponding to the boundary value problem \eqref{defective}$-$\eqref{GIBC} $ f \longmapsto u$
is a continuous linear mapping from $H^{1/2}(\Gamma_{1})$ to $\widetilde{H}^1 (D_1 )$.
\end{theorem}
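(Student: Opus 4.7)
The plan is essentially to assemble the pieces laid out in the discussion preceding the statement into a Lax-Milgram argument. Since uniqueness has been established by the previous Lemma, and since continuity of a linear solution operator is equivalent to existence together with an a priori estimate $\|u\|_{\widetilde{H}^1(D_1)} \le C\|f\|_{H^{1/2}(\Gamma_1)}$, the work reduces to producing a solution satisfying such an estimate.

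First I would introduce the harmonic lifting $u_0 \in H^1(D)$ of $f$ on $\Gamma_1$; by standard well-posedness for the Dirichlet problem for $\Delta$ in $D$ together with interior elliptic regularity and the trace theorem (using $\text{dist}(\Gamma_1,\overline{D}_0)\ge d > 0$), one obtains
$$\|u_0\|_{H^1(D)} + \|u_0\|_{H^1(\Gamma_0)} \le C\|f\|_{H^{1/2}(\Gamma_1)},$$
as already displayed in the excerpt. Writing $u = v + u_0\big|_{D_1}$ with $v \in \widetilde{H}_0^1(D_1,\Gamma_1)$, a standard integration by parts together with the weak form of $\mathscr{B}(u)=0$ on $\Gamma_0$ reduces the boundary value problem to the variational equation \eqref{varform-solu} for $v$.

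Next I would verify the two hypotheses of Lax-Milgram for $A(\cdot,\cdot)$ on $\widetilde{H}_0^1(D_1,\Gamma_1)$. Continuity follows from Cauchy-Schwarz together with the $L^\infty$ bounds on $\eta$ and $\gamma$. Coercivity follows from the lower bound
$$|A(v,v)| \ge \min\{1,\eta_{\min},\gamma_{\min}\}\Bigl(\|\grad v\|^2_{L^2(D_1)} + \|v\|^2_{H^1(\Gamma_0)}\Bigr)$$
combined with the Poincar\'e inequality on $\widetilde{H}_0^1(D_1,\Gamma_1)$, which is available because $v$ vanishes on $\Gamma_1$ and so its full $H^1(D_1)$ norm is controlled by $\|\grad v\|_{L^2(D_1)}$. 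Lax-Milgram then yields a unique $v$ with $\|v\|_{\widetilde{H}^1(D_1)} \le C\|A(u_0,\cdot)\|_{(\widetilde{H}_0^1)^*} \le C\|u_0\|_{H^1(D_1)} + C\|u_0\|_{H^1(\Gamma_0)} \le C\|f\|_{H^{1/2}(\Gamma_1)}$, and setting $u = v + u_0|_{D_1}$ completes the existence part with the desired estimate. Linearity of $f \mapsto u$ is immediate from the linearity of the Dirichlet lifting, of $A(\cdot,\cdot)$, and of $\mathscr{B}$.

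The only real subtlety, and the main point to check carefully, is that coercivity is obtained on the full $\widetilde{H}^1$-norm rather than only on $H^1(D_1)$: the positive lower bound on $\text{Re}(\eta)$ is what controls the $H^1(\Gamma_0)$ contribution of $v$, while the positive lower bound on $\text{Re}(\gamma)$ controls the $L^2(\Gamma_0)$ part, and Poincar\'e handles the bulk $L^2(D_1)$ part through the zero trace on $\Gamma_1$. Once this is confirmed, the rest of the argument is routine and the theorem follows.
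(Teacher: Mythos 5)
Your proposal is correct and follows essentially the same route as the paper: harmonic lifting of $f$, the decomposition $u = v + u_0\big|_{D_1}$, the variational problem \eqref{varform-solu}, coercivity of $A(\cdot,\cdot)$ via the lower bounds on $\text{Re}(\eta)$, $\text{Re}(\gamma)$ and the Poincar\'e inequality on $\widetilde{H}_0^1(D_1,\Gamma_1)$, and Lax--Milgram with the resulting a priori estimate. No gaps to report.
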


We now assume that the voltage $f $ is applied on the outer boundary $\Gamma_{1}$ and the measured data is give by the current $\partial_\nu u \in H^{-1/2}(\Gamma_{1})$. From the knowledge of the currents we wish to derive a sampling algorithm to determine the impenetrable inclusion $D_0$ without any a prior knowledge of the number of inclusions or the boundary coefficients $\eta$ and $\gamma$. 

We now define the data operator that will be studied in the coming sections to derive the Sampling Method. 
To do so, we {\color{black} recall that $u_0 \in H^1(D)$ is} the unique solution to the following boundary value problem   
\begin{align}
\Delta u_0=0 \quad \text{in} \quad  D \quad \text{with} \quad u_0  \big|_{\Gamma_{1}}= f \label{healthy}
\end{align} 
for a given $f \in H^{1/2}(\Gamma_{1})$. The function $u_0$ is the electrostatic potential for the healthy material and is known since the outer   boundary is known.
Using the linearity of the partial differential equation and boundary conditions on $\Gamma_{0}$ and $\Gamma_{1}$ we have that the voltage to electrostatic potential mappings 
$$ f \longmapsto u_0  \quad \text{and} \quad f \longmapsto u $$
are bounded linear operators from $H^{1/2}(\Gamma_{1})$ to $H^1(D)$ and $\widetilde{H}^1 (D_1)$, respectively. We now define the {\it Dirichlet-to-Neumann } (DtN) mappings such that 
$$\Lambda \, \, \, \text{and} \, \, \,  \Lambda_0 : H^{1/2}(\Gamma_{1}) \longmapsto H^{-1/2}(\Gamma_{1})$$
where 
$$\Lambda f=\partial_{\nu} u \big|_{\Gamma_{1}} \quad \text{ and } \quad \Lambda_0 f=\partial_{\nu} u_0 \big|_{\Gamma_{1}}.$$
By appealing to Theorem \ref{well-posed} and the well-posedness of \eqref{healthy} we have that the DtN mappings are bounded linear operators by Trace Theorems. Our main goal is to solve the {\it inverse shape problem} of recovering the boundary $\Gamma_0$ from a knowledge of the difference of the DtN mappings $(\Lambda_0 - \Lambda)$. This difference is the current gap imposed on the system by the presence of the inclusion $D_0$. By analyzing the data operator $(\Lambda_0 - \Lambda)$ we wish to derive a computationally simple algorithm to detect the inclusion.  

The problem of determining an inclusion and its impedance has been studied by many authors. For the case when $\eta = 0$ iterative methods are analyzed for recovering the inclusion and impedance parameter in \cite{C-Map-imp,EIT-impedance}. In \cite{C-Map-imp} conformal mapping is utilized for the case where there is a single inclusion with $\gamma$ sufficiently small. Where as in \cite{EIT-impedance} a system of non-linear integral equations is used to derive {\color{black}an} iterative scheme to solve the inverse shape and impedance problem. Results on uniqueness and stability for recovering the inclusion and/or impedance has been studied in recent manuscripts \cite{unique-imp,unique-imp2}.  In \cite{unique-imp} it is proven that roughly speaking two Cauchy pairs are enough to uniquely determine the boundary of the inclusion provided the currents are linearly independent and non-negative. To prove the uniqueness the author uses techniques for classical solution to {\color{black}Laplace's} equation which requires that $\gamma$ be a $C^{1,\alpha}$ function and $\Gamma_0$ is class $C^{2,\alpha}$ for some  $0<\alpha <1$. For the case when $\eta \neq 0$ the authors in \cite{2nd-order-inclusion} minimize a cost functional to recover $\eta$ from the measurements assuming that $\gamma$ and $\Gamma_0$ are known. The full inverse shape and impedance problem was studied in \cite{CK-gibc} where non-linear integral equations are used to recover the inclusion and the impedance parameters. The authors also discuss the uniqueness for the inverse problem, where an infinite data set is proven to uniquely recover the inclusion and two Cauchy pairs can recover that impedance parameters once the inclusion is known. 

One disadvantage of using iterative methods is the fact that usually a `good' initial estimate for the inclusion and/or coefficients are needed to insure that the iterative process will converge to the unique solution to the inverse problem. To avoid requiring any a prior knowledge of the physics (boundary conditions of the inclusion) we derive a sampling method to reconstruct the boundary of the inclusion. The idea is that one can split the full inverse problem into two parts: the inverse shape and inverse impedance problems. Once the boundary $\Gamma_0$ is known or approximated via the sampling method recovering the impedance parameters becomes a linear problem and can be solved using a direct algorithm. In the next section, we propose a sampling method to determine that shape and we later remark on how the  impedance parameters can be determined using a direct algorithm.

%%%%%%%%%%%%%%%%%%%%%%%%%%%%%%%%%%%%%%%%%%%%%%%%%%%%%%%%%%%%%%%%%%%
\section{Solution to the Inverse Shape Problem}\label{inverse-shape}
Now assume that the DtN mapping $\Lambda$ is known from the measurements and $\Lambda_0$ is given from direct calculations. We will give a solution to the inverse shape problem via a sampling method. In general, sampling algorithms connect the support of the inclusion to an indicator function derived from an ill-posed equation involving the measurements operator and a singular solution to the background problem. We now focus on deriving a sampling method for detecting the inclusion $D_0$ from the measurements operator given by the difference of the DtN mappings $(\Lambda_0 - \Lambda)$ for the generalized impedance boundary condition. The work in \cite{FM-GIBC} is for a more generalized boundary condition where Far-Field acoustic data is used. In this section, we will study  the so-called `Current-Gap' operator in order to determine the inclusion.

To begin, we define the auxiliary operator that will be important to deriving our Sampling method. Now, for a given $h \in H^{-1}(\Gamma_0)$ we define $w \in \widetilde{H}^1( D_1)$ to be the unique solution of  
\begin{align}
\Delta  w=0 \quad &\text{in} \quad  D_1  \quad \text{with} \quad w \big|_{\Gamma_1}= 0 \quad \text{and} \quad   \overline{\mathscr{B}}({w})={h} \,\,\,\text{on} \,\,\,\, \Gamma_0 \label{equ-w1}
\end{align} 
where the overline denotes complex conjugation and 
 $$\overline{\mathscr{B}}({w})= \partial_{\nu}  w -   \frac{ \text{d} }{\text{d} s}  \overline{\eta}   \frac{\text{d}  }{\text{d} s}  w +  \overline{\gamma} w.  $$
It is clear that by appealing to a variational argument one can show that \eqref{equ-w1} is well-posed. Therefore, we can define the bounded linear operator
$$ G: H^{-1}(\Gamma_0) \longmapsto H^{-1/2}(\Gamma_1) \quad \text{ given by } \quad Gh = \partial_{\nu} w \big|_{\Gamma_1}$$
where $w$ is the unique solution to equation \eqref{equ-w1}. We first need to understand the operator $G$ where we wish to study the properties of the operator which are given in the following result. 

\begin{theorem}\label{G}
{\color{black} The operator $G$ is} compact and injective.  
\end{theorem}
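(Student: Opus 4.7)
\textbf{Plan for the proof of Theorem \ref{G}.}

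The plan is to treat injectivity and compactness separately, since they rely on different tools.

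For injectivity, suppose $h \in H^{-1}(\Gamma_0)$ satisfies $Gh = 0$, so that the solution $w$ of \eqref{equ-w1} has $\partial_\nu w|_{\Gamma_1} = 0$ on top of the imposed $w|_{\Gamma_1} = 0$. Both Cauchy data of the harmonic function $w$ then vanish on the $C^2$ surface $\Gamma_1$, so Holmgren's unique continuation theorem forces $w \equiv 0$ in $D_1$. Passing to the traces on $\Gamma_0$ yields $w|_{\Gamma_0}=0$ in $H^1(\Gamma_0)$ and $\partial_\nu w|_{\Gamma_0} = 0$ in $H^{-1/2}(\Gamma_0)$, so every term in $\overline{\mathscr{B}}(w)|_{\Gamma_0}$ vanishes and $h = 0$.

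For compactness, the strategy is to exploit the separation $\text{dist}(\Gamma_1, \overline{D}_0) \geq d > 0$: near $\Gamma_1$ the function $w$ is harmonic with the smooth Dirichlet datum $w|_{\Gamma_1} = 0$, so $\partial_\nu w|_{\Gamma_1}$ should carry extra regularity beyond $H^{-1/2}(\Gamma_1)$. Concretely, I would fix a thin $C^2$ collar $B \subset D_1 \cup \Gamma_1$ around $\Gamma_1$ whose boundary is $\Gamma_1$ together with a $C^2$ surface $\Sigma \subset D_1$, chosen so that $\overline{B} \cap \overline{D}_0 = \emptyset$. The variational well-posedness of \eqref{equ-w1} gives $\|w\|_{\widetilde{H}^1(D_1)} \leq C\|h\|_{H^{-1}(\Gamma_0)}$. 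Next, shrinking to a slightly smaller collar $B' \subset B$ with $\Gamma_1 \subset \partial B'$ and $\partial B' \setminus \Gamma_1$ compactly contained in $B$, interior elliptic regularity for the harmonic function $w$ on $B$ controls $\|w\|_{H^s(\partial B' \setminus \Gamma_1)}$ for every $s$ by $\|w\|_{L^2(B)}$, and hence by $\|h\|_{H^{-1}(\Gamma_0)}$. Finally, standard Dirichlet regularity for the $C^2$ domain $B'$ with smooth boundary data on both components of $\partial B'$ upgrades this to $w \in H^2(B')$ with a bound by $\|h\|_{H^{-1}(\Gamma_0)}$. The normal trace then produces a bounded map $H^{-1}(\Gamma_0) \to H^{1/2}(\Gamma_1)$, and composing with the compact embedding $H^{1/2}(\Gamma_1) \hookrightarrow H^{-1/2}(\Gamma_1)$ (since $\Gamma_1$ is a compact $C^2$ manifold) yields compactness of $G$.

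The main obstacle I anticipate is organizing this regularity lift cleanly: the rough datum $h$ lives on $\Gamma_0$, yet one must show a gain of regularity for the Neumann trace on the disjoint surface $\Gamma_1$. The spatial separation $\text{dist}(\Gamma_1, \overline{D}_0) \geq d > 0$ is essential here, since it decouples the two boundary components and allows the interior elliptic regularity argument to bridge the gap.
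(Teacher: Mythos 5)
Your proposal is correct and follows essentially the same route as the paper: injectivity via Holmgren's theorem applied to the vanishing Cauchy data of $w$ on $\Gamma_1$, and compactness by exploiting the separation $\text{dist}(\Gamma_1,\overline{D}_0)\geq d>0$ to cut the domain with an interior $C^2$ surface, gaining interior regularity there, upgrading to $H^2$ up to $\Gamma_1$ by global elliptic regularity, and concluding with the trace theorem and the compact embedding $H^{1/2}(\Gamma_1)\hookrightarrow H^{-1/2}(\Gamma_1)$. The paper phrases the cut with a region $\Omega$ containing $D_0$ rather than your collar around $\Gamma_1$, but this is only a cosmetic difference.
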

\begin{proof}
We begin by proving the compactness. Notice that by interior elliptic regularity we can have that the solution to \eqref{equ-w1} is in $H^2_{loc}(D_1)$ for any $h \in H^{-1}(\Gamma_0)$. Since $\text{dist}(\Gamma_\text{1} , \overline {D}_0)\geq d>0$ we have that there is a $\Omega$ such that $D_0 \subset \overline{\Omega} \subset D$ where $\partial \Omega$ is class $C^2$. Notice that the interior $H^2$ regularity implies that the trace of $w$ on $\partial \Omega$ is in $H^{3/2}(\partial \Omega)$ giving that $w$ satisfies $w \in H^2(D \setminus \overline{\Omega})$ by global elliptic regularity. The Trace Theorem gives that $Gh \in H^{1/2}(\Gamma_1)$ and the compact embedding of $H^{1/2}(\Gamma_1)$ into $H^{-1/2}(\Gamma_1)$ proves the compactness. 

To prove the infectivity assume that $h \in \text{Null} (G)$. This gives that the function $w$ is the solution to \eqref{equ-w1} with boundary data $h$ on $\Gamma_0$ has zero Cauchy data on $\Gamma_1$. Therefore, by appealing to Holmgren's Theorem we have that $w=0$ in $D_1$ which  implies that $h=0$.
\end{proof}

Now let the sesquilinear form
$$\langle \varphi  ,\psi  \rangle_{\Gamma_j} = \int\limits_{\Gamma_j } \varphi  \,\overline{ \psi }  \, \text{d}s \quad \text{ for all } \quad \varphi \in H^{p}(\Gamma_j) \quad \text{ and }  \quad  \psi \in  H^{-p}(\Gamma_j)$$ 
denote the dual pairing between $H^{p}(\Gamma_j)$ and $H^{-p}(\Gamma_j)$ (for $p \geq 0$) with $L^2(\Gamma_j)$ as the pivot space where $\Gamma_j$ for $j=0,1$ is the $C^2$ closed curve in $\R^2$ defined in the previous section.
Where we have the inclusions 
$$ H^1(\Gamma_j) \subset H^{1/2}(\Gamma_j) \subset L^{2}(\Gamma_j) \subset H^{-1/2}(\Gamma_j) \subset H^{-1}(\Gamma_j)$$
for the Hilbert Spaces $H^{p}(\Gamma_j)$ and there Dual Spaces $H^{-p}(\Gamma_j)$. 
In our analysis we will need the adjoint operator to $G$ with respect to the above sesquilinear form $\langle \cdot  \, , \cdot  \rangle_{\Gamma_j}$ which is given in the following. 
\begin{theorem}\label{G-dual}
The adjoint operator 
$$G^{*}: H^{1/2}(\Gamma_1) \longmapsto H^{1}(\Gamma_0) \quad  \text{is given by} \quad G^{*} f = - u \big|_{\Gamma_0}.$$
Moreover, $G^{*}$ is injective (i.e. $G$ has a dense range).
\end{theorem}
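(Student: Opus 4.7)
The plan is to derive the adjoint identity by applying Green's second identity to $w$ (the solution defining $Gh$) and $\bar u$ (the conjugate of the solution appearing in $G^*f$) over the annular domain $D_1$, and then exploit the dual boundary conditions on $\Gamma_0$ to eliminate the integrals there, leaving only boundary integrals on $\Gamma_1$ and $\Gamma_0$ that match the claimed adjoint formula. Injectivity will then follow from a standard Holmgren/unique-continuation argument.

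First I would write the duality identity to be verified, namely
\[
\langle Gh,f\rangle_{\Gamma_1} \;=\; \int_{\Gamma_1}(\partial_\nu w)\,\bar f\,\text{d}s \;=\; \langle h, G^* f\rangle_{\Gamma_0} \;=\; \int_{\Gamma_0} h\,\overline{G^*f}\,\text{d}s.
\]
Since $w$ and $\bar u$ are harmonic in $D_1$, Green's second identity gives
\[
0=\int_{\partial D_1}\bigl(w\,\partial_\nu \bar u-\bar u\,\partial_\nu w\bigr)\,\text{d}s,
\]
where $\nu$ is the outward normal to $D_1$. On $\Gamma_1$, $w=0$ and $\bar u=\bar f$, so this piece contributes $-\int_{\Gamma_1}\bar f\,\partial_\nu w\,\text{d}s$. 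Rearranging yields
\[
\int_{\Gamma_1}(\partial_\nu w)\,\bar f\,\text{d}s \;=\; \int_{\Gamma_0}\bigl(w\,\partial_\nu \bar u-\bar u\,\partial_\nu w\bigr)\,\text{d}s.
\]

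The main computation is to show the right-hand side equals $-\int_{\Gamma_0} h\,\bar u\,\text{d}s$. Using $\mathscr{B}(u)=0$ on $\Gamma_0$ (conjugated) and $\overline{\mathscr{B}}(w)=h$ on $\Gamma_0$, I substitute
$\partial_\nu \bar u=\tfrac{\text{d}}{\text{d}s}\!\bigl(\bar\eta\,\tfrac{\text{d}\bar u}{\text{d}s}\bigr)-\bar\gamma\bar u$ and
$\partial_\nu w=h+\tfrac{\text{d}}{\text{d}s}\!\bigl(\bar\eta\,\tfrac{\text{d}w}{\text{d}s}\bigr)-\bar\gamma w$
into the $\Gamma_0$ integrand. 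The zeroth-order terms $\bar\gamma\bar u w$ cancel immediately, and the second-order terms combine into
\[
\int_{\Gamma_0}\!\Bigl[w\,\tfrac{\text{d}}{\text{d}s}\!\bigl(\bar\eta\,\tfrac{\text{d}\bar u}{\text{d}s}\bigr)-\bar u\,\tfrac{\text{d}}{\text{d}s}\!\bigl(\bar\eta\,\tfrac{\text{d}w}{\text{d}s}\bigr)\Bigr]\text{d}s \;-\; \int_{\Gamma_0}h\,\bar u\,\text{d}s.
\]
Because $\Gamma_0$ is a closed curve, integration by parts in $s$ produces no boundary terms, and the remaining bracketed integrand vanishes by symmetry of the bilinear expression in $\tfrac{\text{d}w}{\text{d}s}$ and $\tfrac{\text{d}\bar u}{\text{d}s}$. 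Hence the $\Gamma_0$ integral collapses to $-\int_{\Gamma_0}h\,\bar u\,\text{d}s$, which matches $\int_{\Gamma_0} h\,\overline{(-u|_{\Gamma_0})}\,\text{d}s$, proving $G^*f=-u|_{\Gamma_0}$. The mapping property $G^*:H^{1/2}(\Gamma_1)\to H^1(\Gamma_0)$ is then immediate from $u\in\widetilde H^1(D_1)$ and the definition of $\widetilde H^1(D_1)$.

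For injectivity, suppose $G^*f=0$, so $u=0$ on $\Gamma_0$. The generalized impedance condition $\mathscr{B}(u)=0$ then forces $\partial_\nu u=\tfrac{\text{d}}{\text{d}s}(\eta\cdot 0)-\gamma\cdot 0=0$ on $\Gamma_0$, so $u$ has vanishing Cauchy data on the $C^2$ curve $\Gamma_0$. Holmgren's theorem (exactly as used in the proof of Theorem 3.1) then yields $u\equiv 0$ in $D_1$, so $f=u|_{\Gamma_1}=0$. Dense range of $G$ follows by the standard duality between kernel of the adjoint and closure of the range. The only subtle point in the whole argument is keeping track of the complex conjugate in $\overline{\mathscr{B}}$ — this is precisely what is needed for the $\bar\eta$ and $\bar\gamma$ terms on $\Gamma_0$ to cancel in the Green's identity computation; the choice of dual pairing $\langle\cdot,\cdot\rangle_{\Gamma_j}$ was engineered for exactly this purpose.
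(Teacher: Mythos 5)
Your proposal is correct and follows essentially the same route as the paper: apply Green's second identity to $w$ and the (conjugated) solution $u$ over $D_1$, use $w|_{\Gamma_1}=0$, $u|_{\Gamma_1}=f$, substitute the boundary conditions $\mathscr{B}(u)=0$ and $\overline{\mathscr{B}}(w)=h$ on $\Gamma_0$, and cancel the $\Gamma_0$ terms by integration by parts on the closed curve to obtain $\langle f, Gh\rangle_{\Gamma_1}=\langle G^*f,h\rangle_{\Gamma_0}=-\int_{\Gamma_0}u\,\overline{h}\,\text{d}s$, with injectivity of $G^*$ (hence dense range of $G$) from the vanishing Cauchy data on $\Gamma_0$ plus Holmgren/unique continuation. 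Your write-up is in fact slightly more explicit than the paper's in spelling out the cancellation of the second-order tangential terms; no gaps.
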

\begin{proof}
To prove the result we apply Green's 2nd Theorem to the functions $\overline{w}$ and $u$ to obtain 
\begin{align*}
0 &= \int\limits_{\Gamma_1} \overline{w} \dnu u-u \dnu \overline{w} \, \text{d}s + \int\limits_{\Gamma_0} \overline{w} \dnu u-u \dnu \overline{w} \, \text{d}s. 
\end{align*}
We now apply the boundary conditions on $\Gamma_0$ and $\Gamma_1$ which gives that 
\begin{align*}
\int\limits_{\Gamma_1} f \dnu \overline{w} \, \text{d}s &= \int\limits_{\Gamma_0} \overline{w} \dnu u-u \left( \frac{ \text{d} }{\text{d} s}  {\eta}   \frac{\text{d}  }{\text{d} s}  \overline{w} -  {\gamma} \overline{w} + \overline{h}  \right) \, \text{d}s= - \int\limits_{\Gamma_0} u \overline{h} \, \text{d}s.
\end{align*}
The above equality implies that $G^{*} f = - u \big|_{\Gamma_0}$ since 
$$\langle f  ,Gh  \rangle_{\Gamma_1} = \langle G^{*} f  ,h  \rangle_{\Gamma_0} = - \int\limits_{\Gamma_0} u \overline{h} \, \text{d}s.$$
Proving the first part of the result. Now assume $f$ is such that $G^{*} f=0$ and the generalized impedance boundary condition gives that $u$ has zero Cauchy data on $\Gamma_0$. By unique continuation and the Trace Theorem we have that $f=0$ and since $G^*$ is injective we have that $G$ has a dense range (see for e.g. \cite{McLean}). 
\end{proof}

In general, sampling methods connect the region of interest to an ill-posed equation involving the data operator. To do so, one needs a singular solution to the background equation i.e. the equation where the region of interest is not present. Using the singularity of the aforementioned solution to the background problem one shows that an associated ill-posed problem is not solvable unless the singularity is contained in the region of interest. To this end, let $\mathbb{G} (\cdot \, ,z) \in H^1_{loc} \big(D \setminus \{z\} \big)$ for $z \in D$ be the solution to 
$$ \Delta  \mathbb{G} (\cdot \, , \, z) =- \delta(\cdot - z)  \quad \text{in} \quad  D \quad  \text{and} \quad  \mathbb{G} (\cdot \, ,  z) =0 \quad \text{on} \quad  \Gamma_1.$$
The following result shows that Range$(G)$ uniquely determines the region $D_0$.

\begin{theorem}\label{range}
{\color{black} The operator $G$ is} such that 
$\partial_{\nu} \mathbb{G} (\cdot \, ,  z) \in \text{Range}(G)$ if and only if $z \in D_0$.  
\end{theorem}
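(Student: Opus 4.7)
This is a classical range-identity argument in the spirit of the Factorization/Sampling Method. I would establish the two implications separately: the ``if'' part by exhibiting an explicit preimage of $\partial_\nu \mathbb{G}(\cdot,z)$ under $G$, and the ``only if'' part by combining Holmgren's theorem with an $H^1$-regularity argument for the Dirichlet Green's function near its pole.

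For the ``if'' direction, assume $z \in D_0$. Since $z$ lies in the open set $D_0$, which is positively separated from $\Gamma_1$ and which satisfies $D_0 \cap \Gamma_0 = \emptyset$, one has $z \notin \overline{D_1}$. Consequently $w := \mathbb{G}(\cdot,z)\big|_{D_1}$ is harmonic and smooth up to $\partial D_1$, belongs to $\widetilde{H}^1(D_1)$, and vanishes on $\Gamma_1$ by construction. Setting $h := \overline{\mathscr{B}}(w)$ produces a (smooth) element of $H^{-1}(\Gamma_0)$, and by the well-posedness of \eqref{equ-w1} $w$ is the unique solution associated with this datum. Hence $Gh = \partial_\nu w\big|_{\Gamma_1} = \partial_\nu \mathbb{G}(\cdot,z)\big|_{\Gamma_1}$, placing $\partial_\nu \mathbb{G}(\cdot,z)$ in the range of $G$.

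For the ``only if'' direction, suppose $\partial_\nu \mathbb{G}(\cdot,z)\big|_{\Gamma_1} = Gh$ for some $h \in H^{-1}(\Gamma_0)$, and let $w \in \widetilde{H}^1(D_1)$ be the corresponding solution of \eqref{equ-w1}. Arguing by contradiction, assume $z \notin D_0$, so $z \in D_1 \cup \Gamma_0$. In either case $\mathbb{G}(\cdot,z)$ is harmonic on the connected open set $D_1 \setminus \{z\}$, and the difference $w - \mathbb{G}(\cdot,z)$ has zero Cauchy data on $\Gamma_1$ (zero Dirichlet trace from both constructions, matching normal derivatives by assumption). Holmgren's theorem together with real-analytic continuation then forces $w \equiv \mathbb{G}(\cdot,z)$ throughout $D_1 \setminus \{z\}$. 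However, near $z$ the Green's function admits the expansion $\mathbb{G}(\cdot,z) = -\tfrac{1}{2\pi}\log|\cdot - z| + (\text{smooth correction})$, so its gradient has magnitude of order $|\cdot - z|^{-1}$; whether $z$ is interior to $D_1$ or lies on the $C^2$ curve $\Gamma_0$, the region $D_1 \cap B(z,\delta)$ contains a sector of positive opening, and the radial integral $\int_0^\delta r^{-1}\,\mathrm{d}r$ diverges. Thus $\mathbb{G}(\cdot,z) \notin H^1(D_1)$, contradicting $w \in H^1(D_1)$, so necessarily $z \in D_0$.

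The main obstacle is the boundary sub-case $z \in \Gamma_0$, which does not fall out of a purely ``classical'' sampling argument tailored to interior singularities. The crucial input is the $C^2$ regularity of $\Gamma_0$: it guarantees that $D_1$ has a well-defined tangent half-plane at $z$, so the $|\cdot - z|^{-1}$ gradient divergence cannot be absorbed by a shrinking angular sector. Once this is noted, the single $H^1(D_1)$-obstruction argument handles both sub-cases uniformly.
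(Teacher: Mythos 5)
Your proof is correct and follows essentially the same route as the paper: the explicit preimage $h_z=\overline{\mathscr{B}}\big(\mathbb{G}(\cdot\,,z)\big)$ for $z\in D_0$, and Holmgren's theorem to identify $w$ with $\mathbb{G}(\cdot\,,z)$ in $D_1$ when $z\notin D_0$. The only (harmless) difference is the final contradiction: you dispose of both sub-cases $z\in D_1$ and $z\in\Gamma_0$ uniformly through the non-$H^1$ gradient blow-up of the Green's function on an interior cone guaranteed by the $C^2$ boundary, whereas the paper argues separately, using continuity of the harmonic $w_z$ at $z$ against the logarithmic singularity when $z\in D_1$ and a Trace Theorem bound on the $H^{1/2}(\Gamma_0)$ norm when $z\in\Gamma_0$.
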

\begin{proof}
To begin, notice that $\mathbb{G}( \cdot \, ,z)$ is harmonic in $D\setminus \{ z \}$ and interior elliptic regularity implies that
$$\text{ for all $z \in D$} \quad \mathbb{G}( \cdot \, ,z) \in H^{2}_{loc} \big(D\setminus \{ z \} \big).$$
Now assume that $z \in D_0$ and therefore we have that $\mathbb{G}( \cdot \, ,z)$ is a solution to \eqref{equ-w1} in $\widetilde{H}^1( D_ 1)$ with $h_z=\overline{\mathscr{B}} \big( \mathbb{G} (\cdot \, ,  z) \big) \in H^{-1}(\Gamma_0)$. By the definition of $G$ we conclude that $Gh_z=\partial_{\nu} \mathbb{G} (\cdot \, ,  z) \big|_{\Gamma_1}$. 

To prove the remaining implication we proceed by way of contradiction. To this end, assume that $z \in D_1$ and let $h_z$ be such that $Gh_z=\partial_{\nu} \mathbb{G} (\cdot \, ,  z) \big|_{\Gamma_1}$ and by definition this implies that there is a $w_z \in H^1(D_1)$ solving \eqref{equ-w1} such that 
$$w_z= \mathbb{G} (\cdot \, ,  z)=0 \quad \text{and} \quad\partial_{\nu} w_z=\partial_{\nu} \mathbb{G} (\cdot \, ,  z) \quad \text{on} \quad {\Gamma_1}.$$
Notice that $w_z-\mathbb{G}(\cdot \, , z)$ is harmonic in $D_1 \setminus \{z\}$ and has zero Cauchy data on $\Gamma_1$.   
By Holmgren's Theorem we can conclude that $w_z= \mathbb{G} (\cdot \, ,  z)$ in $D_1 \setminus \{z\}$. Since $w_z$ is harmonic in $D_1$ we have that $w_z$ is continuous at the point $z$ (by interior regularity and Sobolev embedding) which gives that $|\mathbb{G} (x,z)|$ is bounded as $x \rightarrow z$, proving the claim by contradiction since $\mathbb{G} (x,z)$ has a logarithmic singularity at $x=z$. Similarly if $z \in \Gamma_0$ we have that there is a $w_z$ that is harmonic in $D_1$ such that $w_z= \mathbb{G} (\cdot \, ,  z)$ in $D_1$. The Trace Theorem would then imply that the $H^{1/2}(\Gamma_0)$ norm of $\mathbb{G} (\cdot \, ,  z)$ is bounded, which again leads to a contradiction. 
\end{proof}

%%%%%%%%%%%%%%%%%%%%%%%%%%%%%%%%%%%%%%%%%%%%%%%%%%%%%%%%%%%%%%%%%%%%%%%%

We have shown that the range of the auxiliary operator $G$ uniquely determines the inclusion $D_0$. Our task is to connect the range of $G$ to the range of a known operator defined from $(\Lambda_0 - \Lambda)$. Now we wish to show that there is a positive compact operator $(\Lambda_0 - \Lambda)_{\sharp}$ defined from the knowledge of $(\Lambda_0 - \Lambda)$ such that 
$$ \text{Range}(G) =   \text{Range} \left((\Lambda_0 - \Lambda)_{\sharp}^{1/2} \right).$$
This gives that the support of the inclusion is connected to the range of a compact operator which is known and therefore an indicator function can be derived from the sampling method by solving an ill-poesd problem $(\Lambda_0 - \Lambda)_{\sharp}^{1/2}$ which will only require the singular values and functions of a known operator. We first need to study the current-gap operator.

\begin{theorem}\label{current-gap}
The Current-Gap operator given by $(\Lambda_0 - \Lambda)f=\partial_{\nu} (u_0-u ) \big|_{\Gamma_1}$
where $u_0$ and $u$ are the solutions of \eqref{defective} and \eqref{healthy} is {\color{black}compact and injective}. Moreover, we have the identity 
$$\big\langle f , (\Lambda_0 - \Lambda) f \big\rangle = \int\limits_{D} |\grad  u_0 |^2 \, \text{d}x - \int\limits_{D_1} \grad |u|^2 \text{d}x  - \int\limits_{\Gamma_0 }  \overline{\eta}  \left| \frac{\text{d} }{\text{d} s} u  \right|^2 + \overline{\gamma} |u|^2 \, \text{d} s.$$
\end{theorem}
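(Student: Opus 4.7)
My plan is to establish the three conclusions in sequence: compactness, the energy identity, and injectivity. For compactness, I would observe that $u_0 - u$ is harmonic in $D_1$ and vanishes on $\Gamma_1$ because $u_0 = u = f$ there. Picking a $C^2$ domain $\Omega$ with $D_0 \subset \Omega$, $\overline{\Omega} \subset D$, and $\dist(\partial\Omega, \Gamma_0) > 0$, interior elliptic regularity makes $u_0 - u$ smooth near $\partial\Omega$, so its trace on $\partial\Omega$ is smooth. Global elliptic regularity applied to the Dirichlet problem on $D \setminus \overline{\Omega}$ (zero data on $\Gamma_1$, smooth data on $\partial\Omega$) then places $u_0 - u$ in $H^2(D \setminus \overline{\Omega})$, so $(\Lambda_0 - \Lambda)f \in H^{1/2}(\Gamma_1)$ depends continuously on $f$. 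Composing with the compact embedding $H^{1/2}(\Gamma_1) \hookrightarrow H^{-1/2}(\Gamma_1)$ completes compactness.

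For the identity, I would start from $\langle f, (\Lambda_0 - \Lambda)f \rangle_{\Gamma_1} = \int_{\Gamma_1} u_0 \overline{\partial_\nu u_0} \, \text{d}s - \int_{\Gamma_1} u \overline{\partial_\nu u} \, \text{d}s$, using $f = u_0 = u$ on $\Gamma_1$. Green's first identity for $u_0$ harmonic in $D$ converts the first term to $\int_D |\grad u_0|^2 \, \text{d}x$. Applying Green's first identity to $u$ harmonic in $D_1$ (whose boundary is $\Gamma_1 \cup \Gamma_0$) yields $\int_{\Gamma_1} u \overline{\partial_\nu u} \, \text{d}s = \int_{D_1} |\grad u|^2 \, \text{d}x - \int_{\Gamma_0} u \overline{\partial_\nu u} \, \text{d}s$. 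Finally, testing the weak impedance condition for $u$ against $u \big|_{\Gamma_0} \in H^1(\Gamma_0)$ and conjugating gives $\int_{\Gamma_0} u \overline{\partial_\nu u} \, \text{d}s = -\int_{\Gamma_0} \overline{\eta} \big|\text{d}u/\text{d}s\big|^2 + \overline{\gamma} |u|^2 \, \text{d}s$, and assembling these produces the stated identity.

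The main obstacle I anticipate is injectivity. If $(\Lambda_0 - \Lambda)f = 0$, then $u_0 - u$ is harmonic in $D_1$ with vanishing Cauchy data on $\Gamma_1$, so Holmgren's theorem (as already invoked in the proof of Theorem~\ref{G}) forces $u = u_0$ in $D_1$. Consequently $u_0$ itself satisfies $\mathscr{B}(u_0) = 0$ weakly on $\Gamma_0$; testing against $u_0 \big|_{\Gamma_0}$ and using that $\nu$ points from $D_1$ into $D_0$, Green's identity on $D_0$ converts the normal-derivative term to $-\int_{D_0} |\grad u_0|^2 \, \text{d}x$, producing
\[
-\int_{D_0} |\grad u_0|^2 \, \text{d}x + \int_{\Gamma_0} \eta \left|\frac{\text{d} u_0}{\text{d}s}\right|^2 + \gamma |u_0|^2 \, \text{d}s = 0.
\]
The real part of this identity has indefinite sign, so I would instead take imaginary parts: the standing assumption $\text{Im}(\eta), \text{Im}(\gamma) > 0$ a.e.\ forces $u_0 = 0$ on $\Gamma_0$, whence $u_0 \equiv 0$ in $D_0$ by uniqueness for the Dirichlet problem and then $u_0 \equiv 0$ throughout $D$ by unique continuation. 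Thus $f = 0$.
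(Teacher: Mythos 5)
Your proposal is correct and follows essentially the same route as the paper: the same elliptic-regularity-plus-compact-embedding argument for compactness (which the paper delegates to the proof of Theorem~\ref{G}), the same Green's identity computation combined with the weak impedance condition for the quadratic identity, and the same Holmgren-plus-imaginary-part argument on $D_0$ for injectivity. The only cosmetic difference is that you conclude injectivity via Dirichlet uniqueness in $D_0$ and analytic continuation, whereas the paper notes that $u_0$ has vanishing Cauchy data on $\Gamma_0$ and invokes Holmgren/unique continuation; these are interchangeable.
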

\begin{proof}
The injectivity follow similarly to the proof in Theorem \ref{G}. Indeed, notice that the difference of the electrostatic potentials $u_0 - u$ in $\widetilde{H}^1( D_1 )$  satisfies the boundary value problem 
\begin{align*}
\Delta  (u_0-u )=0 \quad &\text{in} \quad  D_1  \\ 
(u_0-u)  \big|_{\Gamma_1}= 0 \quad &\text{and} \quad  \mathscr{B}(u_0-u ) = \mathscr{B}(u_0) \,\,\,\text{on} \,\,\,\, \Gamma_0. 
\end{align*} 
Proceeding as in Theorem \ref{G} implies that $(\Lambda_0 - \Lambda)$ maps into $H^{1/2}(\Gamma_1)$ and the compact embedding gives that the operator is compact. 

To prove injectivity let $f \in \text{Null} (\Lambda_0 - \Lambda)$ then we have that $u_0-u$ has vanishing Cauchy data on $\Gamma_1$ and is harmonic in $D_1$. Therefore{\color{red},} by Holmgren's Theorem we can conclude that $\mathscr{B}(u_0) =0$ on $\Gamma_0$.  Since $u_0$ is harmonic in $D_0$ and satisfies the generalized impedance condition we obtain that 
$$\Delta u_0 = 0 \quad \text{in} \quad D_0 \quad \text{and} \quad  \partial_{\nu} u_0 - \frac{ \text{d} }{\text{d} s} {\eta} \frac{\text{d}  }{\text{d} s} u_0 +\gamma u_0 = 0 \quad \text{on} \quad \Gamma_0 $$
where $\nu$ is the unit inward pointing norm to $D_0$. Therefore, we have that 
$$\int\limits_{ D_0} | \grad u_0 |^2  \, \text{d} x =  \int\limits_{\Gamma_\text{0} } \eta \left| \frac{\text{d} }{\text{d} s} u_0 \right|^2 \, \text{d} s  + \int\limits_{\Gamma_\text{0} } \gamma |u_0|^2 \, \text{d} s.$$
Notice, that {\color{black}since Im$(\gamma)$ is strictly positive on $\Gamma_0$} then by taking the imaginary part of the above equality gives that $u_0=0$ {\color{black} on $\Gamma_0$}. The generalized impedance condition implies that $u_0$ has zero Cauchy data {\color{black}on} $\Gamma_0$. By appealing to Holmgren's Theorem and unique continuation we have that $u_0=0$ in $D$ which gives   $f=0$, proving injectivity. 
  
Now by definition we have that 
\begin{align*}
\big\langle f , (\Lambda_0 - \Lambda) f \big\rangle_{\Gamma_1}&= \int\limits_{\Gamma_1} f\,  \partial_{\nu} \overline{u}_0 -  f \, \partial_{\nu} \overline{u} \, \text{d}s = \int\limits_{\Gamma_1} u_0 \partial_{\nu} \overline{u}_0 \, - \,u \partial_{\nu} \overline{u} \, \text{d}s. 
\end{align*}
Now, by Green's 1st identity  we have that 
\begin{align*}
\big\langle f , (\Lambda_0 - \Lambda) f \big\rangle_{\Gamma_1}= \int\limits_{D} | \grad {u}_0 |^2 \, \text{d}x - \int\limits_{D_1} |\grad u|^2\, \text{d}x + \int\limits_{\Gamma_0} u \partial_{\nu} \overline{u} \, \text{d}s.
\end{align*}
From the generalized impedance boundary condition on $\Gamma_0$ we obtain that 
\begin{align*}
\big\langle f , (\Lambda_0 - \Lambda) f \big\rangle_{\Gamma_1}=\int\limits_{D} | \grad {u}_0 |^2 \, \text{d}x - \int\limits_{D_1} |\grad u|^2\, \text{d}x - \int\limits_{\Gamma_0 }  \overline{\eta}\,  \left| \frac{\text{d} }{\text{d} s} u \right|^2  \, + \, \overline{\gamma} |u|^2 \, \text{d} s
\end{align*}
proving the claim.    
\end{proof}

We are almost ready to prove the main result of this section. To do so, we first define the imaginary part of the current-gap operator defined as 
$$\text{Im}(\Lambda_0-\Lambda) = \frac{1}{2 \text{i}} \big[ (\Lambda_0-\Lambda) - (\Lambda_0-\Lambda)^*  \big].$$
It is clear that by Theorem \ref{current-gap} that 
$${\displaystyle \text{Im}\big\langle f , (\Lambda_0 - \Lambda) f \big\rangle = \int\limits_{ \Gamma_0 }  \text{Im}(\eta) \left| \frac{\text{d} }{\text{d} s} u  \right|^2 + \text{Im}(\gamma) |u|^2 \, \text{d} s.}$$
Now assume that  the imaginary parts of ${\eta } \, \text{ and } \,{\gamma} $ are bounded below, then we have that there are constants constant $C_1 \, , \, C_2>0$ such that 
$$ C_1 \| G^{*} f  \|_{H^1(\Gamma_0)}^2 \leq  \text{Im}\big\langle f , (\Lambda_0 - \Lambda) f \big\rangle  \leq C_2 \| G^{*} f  \|_{H^1(\Gamma_0)}^2.$$
This implies that $\text{Im}(\Lambda_0-\Lambda)$ is a positive compact operator by the compactness of $(\Lambda_0-\Lambda)$ and the injectivity of $G^*$. We then have that $\text{Im}(\Lambda_0-\Lambda)$ has a positive square root such that 
$$C_1 \| G^{*} f  \|_{H^1(\Gamma_0)}^2 \leq   \left\| \text{Im}(\Lambda_0 - \Lambda)^{1/2} f \right\|^2_{H^{-1/2}(\Gamma_1)}  \leq C_2 \| G^{*} f  \|_{H^1(\Gamma_0)}^2$$
for all $f \in H^{1/2}(\Gamma_1)$. We now state the quintessential lemma to prove the range equality need for our sampling method. The proof for this lemma can be obtain by the results found in \cite{range-lemma} and the arguments in \cite{FM-EIT} for real Hilbert Spaces can easily be generalized to Banach Spaces. 

\begin{lemma}\label{Range-Equality}
Let $A_j$ be bounded linear operators mapping  $X_j \longmapsto Y$  where $X_j$ and $Y$ are Banach Spaces. If 
$$ \exists \, c_1 \, , \, c_2 >0 \quad \text{such that } \quad c_1 \| A_1^* f \|_{X_1^*} \leq \| A_2^* f \|_{X_2^*}  \leq c_2 \|A_1^* f\|_{X_1^*}  $$
for all $f \in Y^*$ then Range$(A_1)$=Range$(A_2)$
\end{lemma}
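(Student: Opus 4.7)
The plan is to decompose the two-sided norm equivalence into two one-sided inequalities and apply a Douglas-type range inclusion theorem in each direction. Concretely, the hypothesis packages as
\begin{align*}
\|A_1^* f\|_{X_1^*} &\leq c_1^{-1}\, \|A_2^* f\|_{X_2^*}, \\
\|A_2^* f\|_{X_2^*} &\leq c_2\, \|A_1^* f\|_{X_1^*},
\end{align*}
for all $f \in Y^*$. The Douglas-type principle says that such a one-sided adjoint bound $\|B_1^* f\|_{X_1^*} \leq c\, \|B_2^* f\|_{X_2^*}$ between bounded operators $B_j : X_j \to Y$ implies Range$(B_1) \subset$ Range$(B_2)$. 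Applying it once gives Range$(A_1) \subset$ Range$(A_2)$, and once more with the roles of $A_1, A_2$ swapped gives the reverse inclusion, yielding the claimed equality.

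To justify the one-sided principle (the Banach space analogue of the classical Douglas theorem, which is what the citation \cite{range-lemma} provides), I would set up the obvious factorization on the level of adjoints. Define a map $T : \text{Range}(B_2^*) \to X_1^*$ by $T(B_2^* f) = B_1^* f$. The hypothesis shows $T$ is well defined (if $B_2^* f_1 = B_2^* f_2$ then $\|B_1^*(f_1 - f_2)\|_{X_1^*} \leq c\,\|B_2^*(f_1-f_2)\|_{X_2^*} = 0$) and bounded by $c$ on Range$(B_2^*)$, hence extends to a bounded map on $\overline{\text{Range}(B_2^*)} \subset X_2^*$. Hahn--Banach extends $T$ to all of $X_2^*$ with the same norm. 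One then identifies the pre-adjoint (or, in general Banach spaces, the adjoint composed with the canonical embedding into the bidual) to obtain a bounded operator $C$ with $B_1 = B_2 \circ C$, which immediately forces Range$(B_1) \subset \text{Range}(B_2)$.

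The main obstacle in this plan is the factorization step in general Banach spaces: in the Hilbert setting of \cite{FM-EIT}, the Riesz representation makes $T$ yield a genuine operator $C : X_1 \to X_2$, and one recovers $B_1 = B_2 \circ C$ by a direct inner product computation. In the Banach setting, the adjoint of the extended $T$ naturally lands in $X_2^{**}$ rather than $X_2$, so one has to argue separately that the range of the resulting operator actually sits inside the canonical image of $X_2$, or else bypass this issue by working directly with range inclusion rather than factorization. This is precisely the point handled in \cite{range-lemma}, and once that technical step is granted, the two-fold application described above finishes the proof with no further work.
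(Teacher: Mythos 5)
Your proposal matches the paper's intended argument: the paper gives no detailed proof of Lemma \ref{Range-Equality}, merely citing the Banach-space factorization result of \cite{range-lemma} together with the Hilbert-space arguments of \cite{FM-EIT}, which is exactly the route you describe of splitting the two-sided adjoint bound into two one-sided inequalities and applying the Douglas/Embry range-inclusion principle in each direction. The bidual subtlety you flag in the factorization step is precisely the content of the cited reference (and is harmless in the paper's application, where all spaces involved are Sobolev, hence Hilbert, spaces), so your deferral of that point is at the same level of detail as the paper itself.
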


By the above inequalities and Lemma \ref{Range-Equality} we have the following result.

\begin{theorem}\label{range-equal}
Assume that the imaginary parts of ${\eta } \, \text{ and } \,{\gamma} $ are bounded below then $$\text{Range}(G) = \text{Range}\left( \text{Im}(\Lambda_0-\Lambda)^{1/2} \right).$$
\end{theorem}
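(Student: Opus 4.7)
The plan is to apply Lemma \ref{Range-Equality} with $A_1 = G$ and $A_2 = \text{Im}(\Lambda_0-\Lambda)^{1/2}$, both viewed as bounded linear operators into the common target $Y = H^{-1/2}(\Gamma_1)$, with $X_1 = H^{-1}(\Gamma_0)$ and $X_2 = H^{1/2}(\Gamma_1)$. Essentially all of the analytic work has already been carried out in the paragraphs above the statement, so this amounts to careful bookkeeping of dual pairings followed by an invocation of the lemma.

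First I would record the identifications provided by the sesquilinear pairings $\langle \cdot\,,\cdot\rangle_{\Gamma_j}$ with $L^2(\Gamma_j)$ as pivot space, namely $X_1^* \cong H^1(\Gamma_0)$ and $Y^* \cong H^{1/2}(\Gamma_1)$. With these, Theorem \ref{G-dual} gives $\|A_1^* f\|_{X_1^*} = \|G^* f\|_{H^1(\Gamma_0)}$. For $A_2$, note that $\text{Im}(\Lambda_0-\Lambda) = \frac{1}{2\text{i}}\big[(\Lambda_0-\Lambda) - (\Lambda_0-\Lambda)^*\big]$ is self-adjoint by construction and has just been shown to be positive and compact, so its positive square root is self-adjoint as well. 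Taking $X_2 = H^{1/2}(\Gamma_1) = Y^*$, self-adjointness of the square root yields $\|A_2^* f\|_{X_2^*} = \|\text{Im}(\Lambda_0-\Lambda)^{1/2} f\|_{H^{-1/2}(\Gamma_1)}$.

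With these identifications in place, the two-sided estimate displayed just above the theorem, after taking square roots, becomes
$$ \sqrt{C_1}\, \|A_1^* f\|_{X_1^*} \leq \|A_2^* f\|_{X_2^*} \leq \sqrt{C_2}\, \|A_1^* f\|_{X_1^*} \quad \text{for all } f \in Y^*,$$
which is precisely the hypothesis of Lemma \ref{Range-Equality}. Hence $\text{Range}(A_1) = \text{Range}(A_2)$, which is the desired conclusion. The only mildly delicate point is justifying that the positive self-adjoint square root exists and has the correct mapping properties, but this follows from the standard functional calculus for the positive compact self-adjoint operator $\text{Im}(\Lambda_0-\Lambda)$ on $H^{1/2}(\Gamma_1)$; this is the main (small) obstacle.
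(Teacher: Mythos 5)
Your proposal is correct and follows essentially the same route as the paper: the paper establishes the two-sided estimate $C_1\|G^*f\|^2_{H^1(\Gamma_0)} \leq \| \mathrm{Im}(\Lambda_0-\Lambda)^{1/2} f\|^2_{H^{-1/2}(\Gamma_1)} \leq C_2\|G^*f\|^2_{H^1(\Gamma_0)}$ and then invokes Lemma \ref{Range-Equality} exactly as you do, with the same identification of the operators $A_1 = G$ and $A_2 = \mathrm{Im}(\Lambda_0-\Lambda)^{1/2}$. Your extra care with the dual-space identifications and the self-adjointness of the positive square root is simply an explicit spelling-out of what the paper leaves implicit.
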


By appealing to Theorem \ref{range} and Theorem \ref{range-equal} we can finally state the main result of the section.This allows one to uniquely determine the inclusion from the knowledge of the DtN mapping $\Lambda$.  

\begin{theorem}\label{FM}
Provided that the imaginary parts of ${\eta} \, \text{ and } \,{\gamma} $ are bounded below then  
$$\partial_{\nu} \mathbb{G} (\cdot \, ,  z) \in \text{Range}\left( \text{Im}(\Lambda_0 - \Lambda)^{1/2} \right) \quad \text{ if and only if}  \quad z \in D_0.$$
Moreover, the mapping $D_0  \longmapsto \Lambda$ is injective. 
\end{theorem}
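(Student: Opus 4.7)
The proof has two parts: the range characterization and the injectivity of the shape-to-data map. Both are essentially bookkeeping from the results already assembled earlier in the section, so the plan is to invoke them in the right order rather than do any new analysis.

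For the range characterization, the plan is simply to chain Theorem \ref{range-equal} with Theorem \ref{range}. Theorem \ref{range-equal} gives the operator identity $\text{Range}(G) = \text{Range}\bigl(\text{Im}(\Lambda_0-\Lambda)^{1/2}\bigr)$ under the standing hypothesis that $\text{Im}(\eta)$ and $\text{Im}(\gamma)$ are bounded below, while Theorem \ref{range} tells us that, for any $z\in D$, the function $\partial_{\nu}\mathbb{G}(\cdot,z)$ lies in $\text{Range}(G)$ if and only if $z\in D_0$. Substituting the first into the second yields the advertised equivalence in one line.

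For the injectivity of $D_0\longmapsto\Lambda$, the plan is a standard contrapositive sampling argument. Suppose that two admissible inclusions $D_0^{(1)}$ and $D_0^{(2)}$ (each with their own boundary coefficients $\eta_j,\gamma_j$) produce the same DtN mapping $\Lambda$. Since $\Lambda_0$ depends only on $D$, which is common to both configurations, the current-gap operator $\Lambda_0-\Lambda$ is the same in both problems; in particular, $\text{Im}(\Lambda_0-\Lambda)^{1/2}$ has the same range in both cases. Applying the equivalence just established once to each configuration, we obtain
$$\partial_{\nu}\mathbb{G}(\cdot,z)\in \text{Range}\bigl(\text{Im}(\Lambda_0-\Lambda)^{1/2}\bigr) \iff z\in D_0^{(1)}\quad\text{and}\quad\iff z\in D_0^{(2)},$$
which forces $D_0^{(1)}=D_0^{(2)}$.

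The only subtlety I expect is making sure the auxiliary operator $G$ in the two configurations is being compared correctly: $G$ itself depends on the inclusion and on $(\eta,\gamma)$, so strictly speaking one cannot say $G^{(1)}=G^{(2)}$. However, the argument does not require this; what is used is only the range characterization of $\text{Im}(\Lambda_0-\Lambda)^{1/2}$ via $\partial_\nu\mathbb{G}(\cdot,z)$, which is intrinsic to the common data. So the main obstacle is notational rather than mathematical, and the injectivity follows cleanly. No new PDE estimate or unique-continuation argument is needed at this stage since they have all been absorbed into the previous theorems.
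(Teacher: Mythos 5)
Your proposal is correct and matches the paper's (implicit) argument: the paper obtains Theorem \ref{FM} precisely by combining Theorem \ref{range} with Theorem \ref{range-equal}, and the injectivity of $D_0 \longmapsto \Lambda$ follows exactly as you describe, since the range test involving $\partial_\nu \mathbb{G}(\cdot\,,z)$ is determined by the common data $\Lambda_0 - \Lambda$. Your remark that $G$ itself need not coincide for the two configurations is a sensible clarification, but no new argument beyond the paper's is required.
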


We define the current-gap equation to be given by: find $f_z  \in H^{1/2}(\Gamma_1)$ such that 
\begin{equation}
 \text{Im}(\Lambda_0-\Lambda)^{1/2} f_z = \partial_{\nu} \mathbb{G} (\cdot \, ,  z)   \quad \text{ for a fixed } \quad z \in D. \label{CGE} 
\end{equation}
Since $ \text{Im}(\Lambda_0-\Lambda)^{1/2}$ has a dense range there exists a sequence of regularized solutions $\big\{f_{z , n} \big\}_{n \in \N}  \in H^{1/2}(\Gamma_1) $ satisfying  
$$\lim\limits_{n \to \infty } \big\|  \text{Im}(\Lambda_0-\Lambda)^{1/2} f_{z , n} - \partial_{\nu} \mathbb{G} (\cdot \, ,  z)   \big\|_{H^{-1/2}(\Gamma_1)}= 0.$$ 
In order to derive our Sampling Method we will show that the sequence $\big\{f_{z , n} \big\}_{n \in \N}$ must be unbounded as $n \to \infty$ for $z \notin D_0$. We proceed by way of contradiction and assume that $\| f_{z , n} \|_{H^{1/2}(\partial D)}$ is bounded with respect to $n$ for any $z \in D$. This implies that there is a weakly convergent subsequence (still denote with $n$) such that $f_{z , n} \weakc f_{z, \infty} \in H^{1/2}(\Gamma_1)$ as $n \to \infty$.
By the compactness of  $\text{Im}(\Lambda_0-\Lambda)^{1/2}$ we can conclude that 
$$  \text{Im}(\Lambda_0-\Lambda)^{1/2} f_{z , n}  \longrightarrow \text{Im}(\Lambda_0-\Lambda)^{1/2} f_{z,\infty} \quad \text{in} \quad H^{-1/2}(\Gamma_1) \quad \text{ as } \quad n \to \infty$$
and therefore $ \text{Im}(\Lambda_0-\Lambda)^{1/2} f_{z,\infty} = \partial_{\nu} \mathbb{G} (\cdot \, ,  z) $ which is a contradiction of Theorem \ref{FM} if $z \notin D_0$. When $z \in D_0$ we have that $\partial_{\nu} \mathbb{G} (\cdot \, ,  z) \in \text{Range}\left( \text{Im}(\Lambda_0 - \Lambda)^{1/2} \right)$ so there exists a $f_{z,\infty}  \in H^{1/2}(\Gamma_1)$ that satisfies \eqref{CGE} and therefore the regularized solutions to \eqref{CGE} is bounded as $n \to \infty$. This implies that at each sampling point $z \in D$ with fixed $n$ we plot the function $z \longmapsto \| f_{z , n} \|^{-1}_{H^{1/2}(\Gamma_1)}$. To approximate the boundary of the inclusion $\Gamma_0$ we construct the level set  $W(z) = \delta \ll 1$.

%%%%%%%%%%%%%%%%%%%%%%%%%%%%%%%%%%%%%%%%%%%%%%%%%%%%%%%%%%%%%%%%%%%%%%%%
 {\bf A numerical example for the unit disk:}
For proof of concept we consider applying Theorem \ref{FM} to a simple set up in the unit disk to provide some numerical examples of our inversion method. We will first consider recovering a disk centered at the origin contained in the unit disk. Notice that the Trace Spaces $H^{\pm 1/2}(\Gamma_1)$ can be identified with $H^{\pm 1/2}_{\text{per}}[0,2 \pi]$. To apply Theorem \ref{FM} we need the normal derivative of the Green's function $\mathbb{G} \big( (r,\theta)  , z \big)$ with zero Dirichlet condition on the boundary of the unit disk where we have converted $x$ into polar coordinates $(r,\theta)$. It is well-known that the normal derivative of $\mathbb{G}\big( (r,\theta)  , z \big)$ at $r=1$ is given by the Poisson kernel  
$$  \partial_{r} \mathbb{G} \big( (1,\theta) ,  z \big)  = \frac{1}{2\pi} \frac{1-|z|^2}{|z|^2 +1-2|z| \cos(\theta - \theta_z )}$$
where $\theta_z$ is the polar angle that the point $z$ makes with the positive $x$-axis. 

We now assume $\Gamma_0$ is given by $\rho \big( \cos(\theta) , \sin(\theta)   \big)$ for some constant $ \rho \in (0,1)$.  Since the domain $D$ is assumed to be the unit disk in $\R^2$ we make the ansatz that the electrostatic potential $u(r ,\theta)$ has the following series representation 
\begin{align}\label{series-solution}
u(r,\theta)=a_0 +b_0 \ln r +  \sum_{|n|=1}^{\infty} \big(a_n r^{|n|} + b_n r^{-|n|} \big)  \, \text{e}^{ \text{i} n \theta} \quad \text{in} \quad D_1
\end{align}
which is harmonic in the annular region. The Fourier coefficients $a_n$ and $b_n$ must be determined by the boundary conditions at $r=1$ and $r=\rho$. For simplicity we assume that the coefficients for the generalized impedance condition are constant giving that 
$$ u(1,\theta)=f(\theta) \quad \text{ and } \quad \left( -\frac{\partial}{\partial r} -\frac{\eta}{\rho^2} \, \frac{\partial^2}{\partial \theta^2} + \gamma \right) u (\rho , \theta)=0.$$
We let $f_n$ for $n \in \Z$ be the Fourier coefficients for $f(\theta)$. Notice that the boundary conditions at $r=1$ imply that 
$$  a_0 =f_0 \quad \text{ and } \quad a_n + b_n =f_n \quad \text{ for all } \,\,\, n \neq 0.$$
The boundary conditions at  $r = \rho$ gives that (after some calculations)
$$ b_0 = - \frac{\gamma \rho }{ \gamma \rho \ln \rho- 1}\,  f_0  \quad \text{ and } \quad b_n =\sigma_n \,  a_n  \quad \text{ for all } \,\,\, n \neq 0$$
where 
$$ \sigma_n = \rho^{2 |n|} \frac{|n|\rho -|n|^2 \eta-\gamma \rho^2}{|n|\rho +|n|^2 \eta+\gamma \rho^2} \quad \text{ for all } \,\,\, n \neq 0.$$
This gives that 
$$ a_n = \frac{f_n}{\sigma_n +1}   \quad \text{ and } \quad b_n =  \frac{\sigma_n f_n}{\sigma_n +1} $$
and plugging the sequences into \eqref{series-solution} gives that the corresponding current on the boundary of the unit disk is given by 
\begin{align}
\partial_r u(1,\theta)= - \sigma_0\,  f_0+ \sum_{|n|=1}^{\infty}  |n|  f_n \, \frac{ 1-\sigma_n }{\sigma_n+1} \,  \text{e}^{ \text{i} n \theta} \quad\text{ where } \quad \sigma_0 = \frac{\gamma \rho }{ \gamma \rho \ln \rho- 1}. \label{current-def}
\end{align}

It is clear that the electrostatic potential and subsequent current for the material without an inclusion is given by 
\begin{align}
u_0(r,\theta)={f_0} + \sum_{|n|=1}^{\infty}  {f_n} r^{|n|} \text{e}^{ \text{i} n \theta} \quad \text{and} \quad  \partial_r u_0(1,\theta)= \sum_{|n|=1}^{\infty} |n|  f_n  \text{e}^{ \text{i} n \theta}. \label{potential}
\end{align}
By subtracting \eqref{current-def} from \eqref{potential} gives a series representation of the difference of the DtN mappings. Interchanging the summation and integration we obtain 
$$(\Lambda_0 - \Lambda ) f = \int\limits_{0}^{2 \pi} K(\theta , \phi) f(\phi) \, \text{d} \phi 
\quad \text{where} \quad K(\theta , \phi) = \frac{\sigma_0 }{ 2 \pi} +\frac{1}{\pi} \sum_{|n|=1}^{\infty}  |n|  \frac{ \sigma_n }{ \sigma_n+1 }    \text{e}^{ \text{i} n( \theta - \phi)}. 
$$

We first approximate the kernel function by truncating the series at $|n|=20$. This should be an accurate approximation of the kernel function since $\sigma_n = \mathcal{O} \big(\rho^{2|n|}\big)$ as $|n|$ tends to infinity. We then discretize the truncated integral operator by an equally spaced 64 point Riemann sum approximation and using a collocation method with $64$ equally spaced points $\theta_j \in [0 , 2\pi )$ giving a $64 \times 64$ matrix. We let the matrix ${\bf A}$ represents the discretized operator $(\Lambda_0 - \Lambda )$ and the vector ${\bf b}_z= \big[ \partial_{r} \mathbb{G} \big( (1,\theta_j ) ,  z \big)  \big]_{j=1}^{64}$. Here we add random noise to the discretized matrix ${\bf A}$ such that 
$${\bf A}^\delta =\Big[{\bf A}_{i,j}\big( 1 +\delta \, {\bf E}_{i,j} \big) \Big]_{i,j=1}^{64} \quad \text{ where } \quad  \| {\bf E} \|_2 = 1.$$
{\color{black} The matrix ${\bf E}$ is taken to be the normalized matrix with random entries uniformly distributed between [-1,1].} We take the noise level $\delta = 0.02$ which corresponds to $2 \%$ {\color{black}relative} random noise added to the data {\color{black} in the sense that $\| {\bf A}^\delta - {\bf A}\|_2 \leq \delta \| {\bf A} \|_2$}. We then define the imaginary part and the square root such that 
$$ \text{Im}({\bf A}^\delta) = \frac{1}{2\text{i}} \big[{\bf A}^\delta-({\bf A^\delta})^*\big] \quad \text{ and } \quad \text{Im}({\bf A}^\delta)^{1/2} = {\bf V} {\bf \Sigma}^{1/2} {\bf V^*}$$ 
where ${\bf V \Sigma V^*}$ is the eigenvalue decomposition for the matrix $\text{Im}({\bf A})$. To compute the indicator associated with Theorem \ref{FM} we solve 
$${\bf V} {\bf \Sigma}^{1/2} {\bf V^*} {\bf f}_z = {\bf b}_z$$
 and since the operator is compact  we have that the matrix ${\bf A}$ is ill-conditioned. In order to find an approximate solution to the discretized equation we use Spectral cut-off where the cut-off parameter is taken to be $10^{-8}$. To recover the inclusion we construct 
$$W_{\text{reg}}(z)=\| {\bf f}_z\|_{2}^{-1} \quad \text{where we plot} \quad W(z)=\frac{W_{\text{reg}}(z)}{ \| W_{\text{reg}}(z) \|_{\infty} }.$$
Theorem \ref{FM} implies that $W(z) \approx 1$ for $z \in D_0$ and $W(z) \approx 0$ for $z \notin D_0$.  See Figures \ref{recon1} and \ref{recon2} for reconstructions of this simple example where the function $W(z)$ is used to visualize the defective region. 

\begin{figure}[h]
\hspace{-0.5in}\includegraphics[scale=0.2]{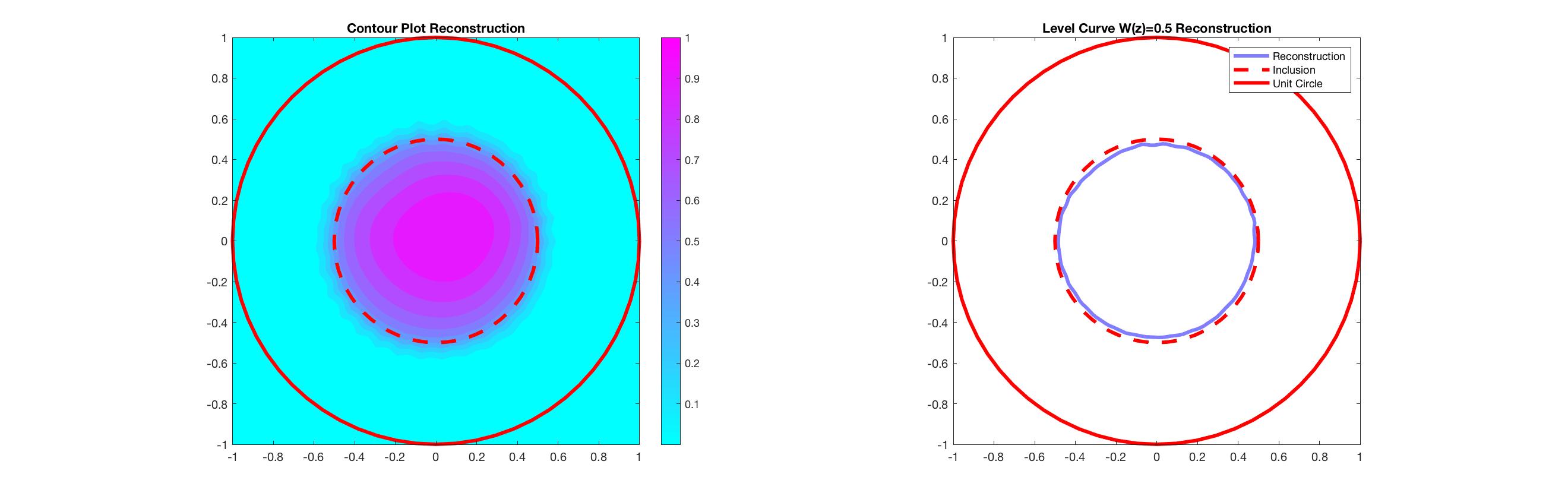}
\caption{ Reconstruction for $\rho = 1/2$ via the Sampling Method with impedance parameters ${\eta = 5+2\text{i}}$ and ${\gamma = 10+\text{i}}$. }
\label{recon1}
\end{figure}

\begin{figure}[h]
\hspace{-0.5in}\includegraphics[scale=0.2]{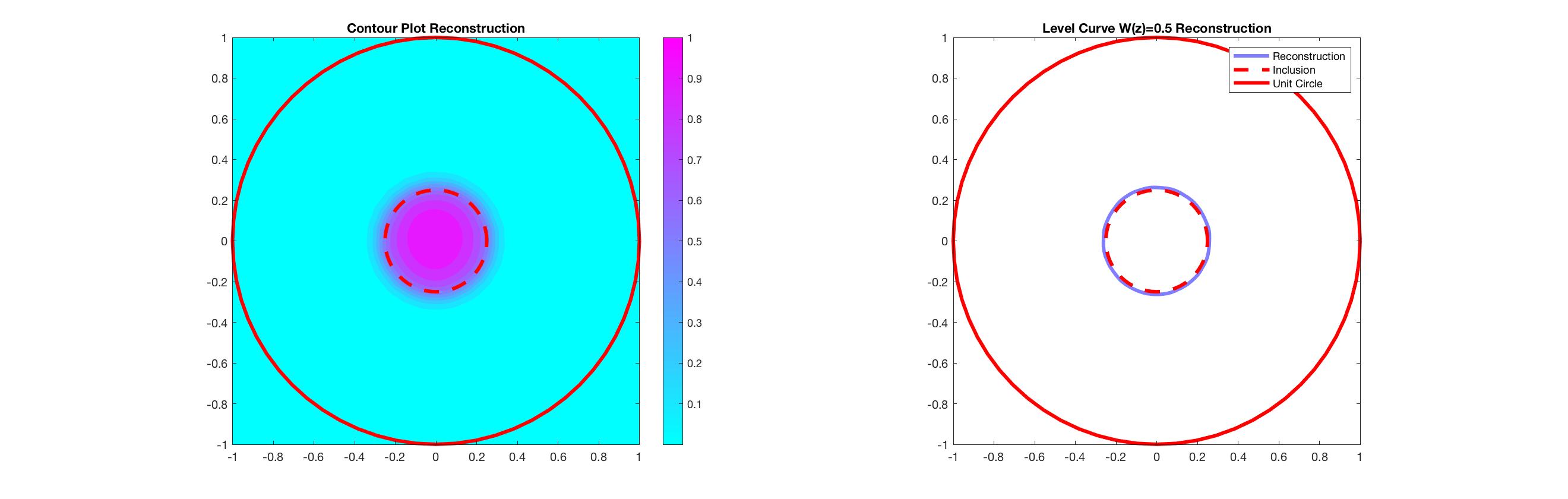}
\caption{Reconstruction for $\rho = 1/4$ via the Sampling Method with impedance parameters ${\eta = 5+2\text{i}}$ and ${\gamma = 10+\text{i}}$.}
\label{recon2}
\end{figure}

In \cite{Harris-Rome} an equivalent indicator function is used given the regularized solution to 
$$\text{Im}({\bf A}) {\bf f}_z = {\bf b}_z  \quad \text{ denoted by}  \quad {\bf f}_z ^\alpha.$$ 
Since the matrix is ill-conditioned one needs to use a suitable regularization techniques to solve the resulting linear system to define the indicator defined in Theorem \ref{FM}. In Section 3.3.3 of  \cite{Harris-Rome} it is shown that on the continue level that the regularized solution ${\bf f}_z ^\alpha$ with regularization parameter $\alpha >0$ is such that 
$$ \lim\inf\limits_{\alpha \to 0} \left\| \text{Im}({\bf A})^{1/2} {\bf f}_z ^\alpha \right\|_{2} < \infty  \quad \text{ if and only if}  \quad z \in D_0.$$
Therefore, we can take the the indicator function to be 
$$P_{\text{reg}}(z)=\| \text{Im}({\bf A}^\delta)^{1/2} {\bf f}_z ^\alpha \|_{2}^{-1} \quad \text{where we plot} \quad P(z)=\frac{P_{\text{reg}}(z)}{ \| P_{\text{reg}}(z) \|_{\infty} }.$$
In order to compute ${\bf f}_z ^\alpha$  we employ the Tikhonov-Morozov regularization strategy. Similarly we expect that $P(z) \approx 1$ for $z \in D_0$ and $P(z) \approx 0$ for $z \notin D_0$.  See Figures \ref{recon3} for the comparison of the indicator functions. 

\begin{figure}[h]
\hspace{-0.5in}\includegraphics[scale=0.2]{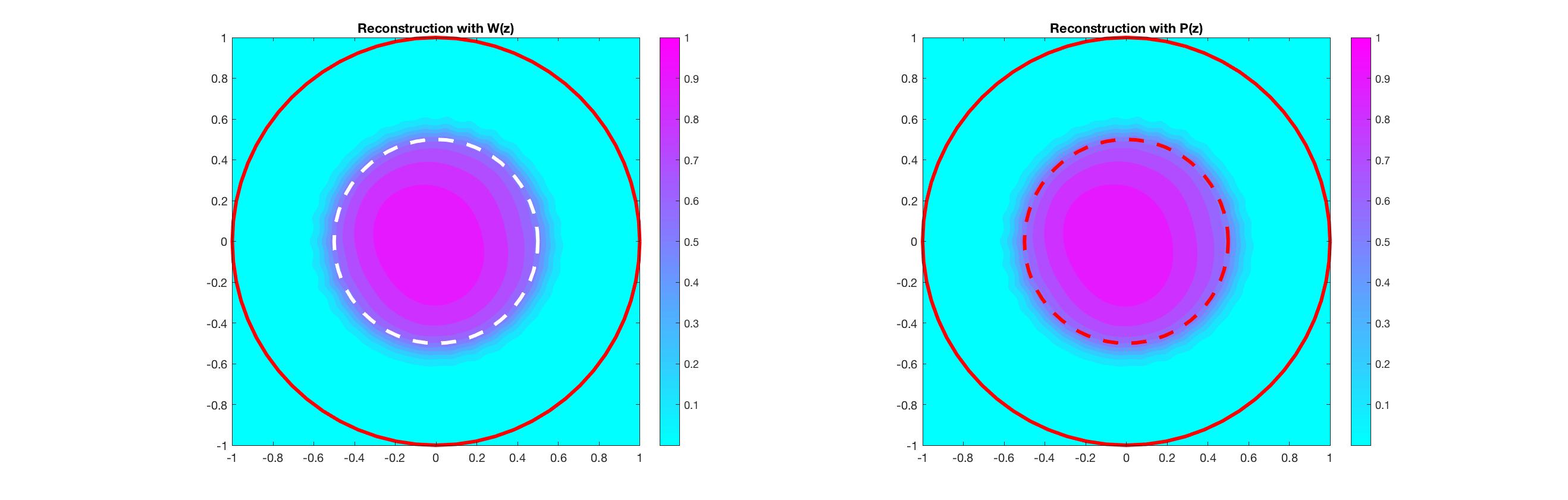}
\caption{ Comparison {\color{black}for the indicator} functions for $\rho = 1/2$ with impedance parameters ${\eta=\gamma = 1+\text{i}}$. }
\label{recon3}
\end{figure}

%%%%%%%%%%%%%%%%%%%%%%%%%%%%%%%%%%%%%%%%%%%%%%%%%%%%%%%%%%%%%%%%%%%%%%%%
\section{ Uniqueness of the Inverse Impedance Problem}\label{inverse-impedance}

In this section, we will discuss the inverse impedance problem of determining the material parameters $\eta (x)$ and $\gamma (x)$ from the DtN mapping $\Lambda(\eta , \gamma)$. We will prove uniqueness for the coefficients given the knowledge of the DtN mapping as well as discuss a direct algorithm to recover the coefficients. Since the boundary condition $\mathscr{B}$ is linear with respect to the coefficients we will derive a ``linear'' algorithm for recovering the coefficients. In this section we will assume that the boundary $\Gamma_0$ is known or approximated by the sampling method presented in the previous section. 

We now turn your attention to proving uniqueness for the inverse impedance problem assuming $\Gamma_0$ is known. Since we assume that we have an infinite data set we should be able to prove uniqueness for sufficiently less regularity than is needed in \cite{CK-gibc}. To prove our uniqueness result we first need that following result.

\begin{theorem}\label{dense-set}
The set 
$$\mathcal{U}=\Big\{ u \big|_{\Gamma_0} \, :  \,  u \in \widetilde{H}^1( D_1)  \, \text{  solving  } \eqref{defective} \,  \text{ for all }  \, f \in  H^{1/2}(\Gamma_1) \Big\}$$
is a dense subspace of $L^2(\Gamma_0)$. 
\end{theorem}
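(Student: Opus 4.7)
The plan is to prove density by a duality argument. Since $L^2(\Gamma_0)$ is a Hilbert space, it suffices to show that the $L^2$-annihilator of $\mathcal{U}$ is trivial: take any $\phi \in L^2(\Gamma_0)$ satisfying
$$\int_{\Gamma_0} u\,\overline{\phi}\,\text{d}s = 0 \quad \text{for every } f \in H^{1/2}(\Gamma_1),$$
where $u \in \widetilde{H}^1(D_1)$ is the solution of \eqref{defective}$-$\eqref{GIBC} corresponding to $f$, and deduce that $\phi=0$.

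To exploit this orthogonality, I would introduce the auxiliary adjoint function: since $\phi \in L^2(\Gamma_0) \hookrightarrow H^{-1}(\Gamma_0)$, let $v \in \widetilde{H}^1(D_1)$ be the unique solution (cf.\ \eqref{equ-w1}) of $\Delta v=0$ in $D_1$ with $v|_{\Gamma_1}=0$ and $\overline{\mathscr{B}}(v) = \phi$ on $\Gamma_0$. Applying Green's 2nd identity to $u$ and $\overline{v}$ in $D_1$ yields
$$0 = \int_{\Gamma_1}\bigl(\overline{v}\,\dnu u - u\,\dnu\overline{v}\bigr)\text{d}s + \int_{\Gamma_0}\bigl(\overline{v}\,\dnu u - u\,\dnu\overline{v}\bigr)\text{d}s.$$
On $\Gamma_1$ we have $\overline{v}=0$ and $u=f$, collapsing the first term to $-\int_{\Gamma_1} f\,\dnu\overline{v}\,\text{d}s$. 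On $\Gamma_0$ I would substitute $\dnu u = \frac{\text{d}}{\text{d}s}\eta\frac{\text{d}}{\text{d}s}u - \gamma u$ and $\dnu\overline{v} = \frac{\text{d}}{\text{d}s}\eta\frac{\text{d}}{\text{d}s}\overline{v} - \gamma\overline{v} + \overline{\phi}$, and integrate the tangential derivatives by parts on the closed curve $\Gamma_0$. The $\eta$-term and the $\gamma$-term are symmetric in $u$ and $\overline{v}$, so they cancel, leaving
$$\int_{\Gamma_0}\bigl(\overline{v}\,\dnu u - u\,\dnu\overline{v}\bigr)\text{d}s = -\int_{\Gamma_0} u\,\overline{\phi}\,\text{d}s.$$
Combining with the standing hypothesis $\int_{\Gamma_0} u\,\overline{\phi}\,\text{d}s = 0$ gives $\int_{\Gamma_1} f\,\dnu\overline{v}\,\text{d}s = 0$ for every $f \in H^{1/2}(\Gamma_1)$, hence $\dnu v = 0$ in $H^{-1/2}(\Gamma_1)$.

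The function $v$ therefore has vanishing Cauchy data on $\Gamma_1$ and is harmonic in $D_1$. By Holmgren's theorem (exactly as used in Theorem~\ref{G}), $v \equiv 0$ in $D_1$. Taking traces on $\Gamma_0$ then gives $\phi = \overline{\mathscr{B}}(v) = 0$, which completes the duality argument and proves density.

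The main delicate point is the symmetry computation on $\Gamma_0$: one has to verify that the surface term $\int_{\Gamma_0}(\overline{v}\,\dnu u - u\,\dnu\overline{v})\,\text{d}s$ reduces cleanly to $-\int_{\Gamma_0} u\,\overline{\phi}\,\text{d}s$ when both $u$ and $v$ carry the (conjugated) generalized impedance condition. This is where the choice of $\overline{\mathscr{B}}$ as the adjoint boundary operator, and the coefficients appearing without conjugation inside the tangential integrations-by-parts, become essential; it is precisely the reason the operator $G$ in Section~\ref{inverse-shape} was introduced with complex-conjugated parameters. Everything else (well-posedness of the auxiliary problem, Holmgren) has already been set up in the paper.
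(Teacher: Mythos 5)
Your proposal is correct and follows essentially the same duality argument as the paper: test the annihilator of $\mathcal{U}$, introduce an auxiliary harmonic function vanishing on $\Gamma_1$ and carrying an impedance-type condition with data $\phi$ on $\Gamma_0$, use Green's second identity plus the boundary conditions to reduce everything to $\int_{\Gamma_1} f\,\dnu \overline{v}\,\text{d}s=0$ for all $f$, and conclude with Holmgren's theorem. The only cosmetic difference is that you pose the auxiliary problem with the conjugated operator $\overline{\mathscr{B}}(v)=\phi$ as in \eqref{equ-w1}, whereas the paper uses $\mathscr{B}(v)=\overline{\phi}$; the two auxiliary functions are complex conjugates of each other, so the computations coincide.
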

\begin{proof}
Notice that by the linearity of the solution mapping $f \mapsto u$ from $H^{1/2}(\Gamma_1)$ to $\widetilde{H}^1( D_1)$ along with the linearity of the trace operator from $\widetilde{H}^1( D_1)$ to $H^{1}(\Gamma_0) \subset L^2(\Gamma_0)$ implies that the set $\mathcal{U}$ is a linear space. To prove the density of the set it is sufficient to prove that the set $\mathcal{U}^{\perp}$ is trivial. We now let $\phi \in \mathcal{U}^{\perp}$ and assume that $v \in \widetilde{H}_0^1 (D_1,\Gamma_\text{1})$ is the unique solution to 
$$ \Delta v=0 \quad \text{in} \quad D_1 \quad \text{ with } \quad v=0 \,\, \text{ on} \quad \Gamma_1 \quad \text{ and } \quad \mathscr{B}(v) = \overline{\phi} \,\, \text{ on } \quad \Gamma_0.$$
Here $\widetilde{H}_0^1 (D_1,\Gamma_\text{1})$ is as defined in Section \ref{direct-problem} and using a variational technique it can be shown that the problem for $v$ is well-posed. We obtain
\begin{align*}
0& =  \int\limits_{\Gamma_0} u \,  \overline{\phi} \, \text{d}s =  \int\limits_{\Gamma_0} u\, \mathscr{B}(v) \, \text{d}s  \\
&=\int\limits_{\Gamma_0} u \partial_\nu v - v \partial_\nu u \, \text{d}s \quad \text{by the generalized impedance condition} \\
&= - \int\limits_{\Gamma_1} u \partial_\nu v - v \partial_\nu u \, \text{d}s \quad \text{ by Green's 2nd Theorem} \\
&= - \int\limits_{\Gamma_1} f \, \partial_\nu v \, \text{d}s  \quad \text{for all } \quad f \in H^{1/2} (\Gamma_\text{1}).
\end{align*}
By the above equality we can conclude that $v=\partial_\nu v=0$ on $\Gamma_1$ and Holmgren's Theorem implies that $v=0$ in $D_1$. The generalized impedance condition gives that $\phi = 0$, proving the claim. 
\end{proof}

With the above result we have all we need to prove that the DtN mapping uniquely determines real-valued parameters $\eta$ and $\gamma$ provided that $\Gamma_0$ is known. {\color{black} We begin with this cases to prove the uniqueness under less regularity assumptions needed in \cite{CK-gibc,CK-GIBC}. The preceding discussion will deal with the case of complex valued impedance parameters. }

\begin{theorem} \label{unique} 
Assume that $\eta$ and $\gamma$ are real-valued and satisfy the assumptions of Section \ref{direct-problem} then the mapping 
$(\eta , \gamma)  \longmapsto \Lambda$ is injective from $C(\Gamma_0) \times L^{\infty} (\Gamma_0)$ to $\mathcal{L} \big( H^{1/2}(\Gamma_1) \, ,\,  H^{-1/2}(\Gamma_1) \big)$.   
\end{theorem}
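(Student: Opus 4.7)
The plan is to argue by contradiction. Suppose two admissible pairs $(\eta_1,\gamma_1)$ and $(\eta_2,\gamma_2)$ produce the same Dirichlet-to-Neumann map $\Lambda$. For each $f \in H^{1/2}(\Gamma_1)$ let $u_j \in \widetilde{H}^1(D_1)$ denote the solution of \eqref{defective}--\eqref{GIBC} with coefficients $(\eta_j,\gamma_j)$. The difference $w=u_1-u_2$ is harmonic in $D_1$ and has vanishing Cauchy data on $\Gamma_1$ (same Dirichlet datum $f$, and same Neumann datum $\Lambda f$ by hypothesis). Since $D_1$ is connected, Holmgren's theorem and unique continuation give $u_1 \equiv u_2 =: u$ in $D_1$. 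Subtracting the two generalized impedance conditions $\mathscr{B}_j(u)\big|_{\Gamma_0}=0$ then yields, in the distributional sense on $\Gamma_0$,
\[
-\frac{\mathrm{d}}{\mathrm{d}s}\left(\tilde{\eta}\,\frac{\mathrm{d}u}{\mathrm{d}s}\right)+\tilde{\gamma}\,u \;=\; 0,\qquad \tilde{\eta}:=\eta_1-\eta_2 \in C(\Gamma_0),\quad \tilde{\gamma}:=\gamma_1-\gamma_2 \in L^{\infty}(\Gamma_0),
\]
and this identity holds for every trace $u|_{\Gamma_0}$ coming from some $f\in H^{1/2}(\Gamma_1)$, i.e.\ for every element of the set $\mathcal{U}$ from Theorem \ref{dense-set}.

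The main obstacle is to conclude $\tilde{\eta}\equiv 0$, and the key idea is a dimension-counting argument. Suppose, toward contradiction, that $\tilde{\eta}(s_0)\neq 0$ at some arclength parameter $s_0$. By continuity of $\tilde{\eta}$ there exists an open arc $I\subset\Gamma_0$ containing $s_0$ on which $|\tilde{\eta}|$ is bounded below by a positive constant. On such an $I$, the identity above is a non-degenerate second order linear ODE with continuous leading coefficient and $L^{\infty}$ zeroth order coefficient; writing it as a first order Carath\'eodory system shows that its solution space in $H^{1}(I)$ is at most two-dimensional. Consequently $\mathcal{U}|_{I}$ sits inside a two-dimensional subspace of $L^{2}(I)$. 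On the other hand, given any $g\in L^{2}(I)$, its extension by zero to $\Gamma_0$ can be approximated in $L^{2}(\Gamma_0)$ by elements of $\mathcal{U}$, and restricting back to $I$ shows that $\mathcal{U}|_{I}$ is in fact dense in the infinite-dimensional space $L^{2}(I)$. This contradiction forces $\tilde{\eta}\equiv 0$ on $\Gamma_0$.

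With $\tilde{\eta}\equiv 0$, the identity collapses to $\tilde{\gamma}\,u = 0$ in $L^{2}(\Gamma_0)$ for every $u\in\mathcal{U}$. Since multiplication by $\tilde{\gamma}\in L^{\infty}(\Gamma_0)$ is a bounded operator on $L^{2}(\Gamma_0)$, and $\mathcal{U}$ is dense there, one obtains $\tilde{\gamma}\,\varphi = 0$ for every $\varphi\in L^{2}(\Gamma_0)$; taking $\varphi=\tilde{\gamma}$ and using that $\tilde{\gamma}$ is real-valued gives $\tilde{\gamma}\equiv 0$ almost everywhere. Combined with $\tilde{\eta}\equiv 0$ this proves $(\eta_1,\gamma_1)=(\eta_2,\gamma_2)$ and hence the injectivity of $(\eta,\gamma)\mapsto\Lambda$. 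The crux of the proof — and the reason an infinite data set suffices while permitting the weakened regularity $\eta\in C(\Gamma_0)$, $\gamma\in L^{\infty}(\Gamma_0)$ of \cite{CK-gibc,CK-GIBC} to be relaxed — is the dimensional mismatch in step two between the two-parameter family of local ODE solutions and the infinite-dimensional family of admissible traces furnished by Theorem \ref{dense-set}.
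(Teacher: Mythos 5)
Your proof is correct, but it reaches the conclusion by a genuinely different route than the paper. The paper first integrates the subtracted impedance conditions over the closed curve $\Gamma_0$ (so the total tangential derivative term drops out) and uses the density of traces from Theorem \ref{dense-set} to get $\gamma_1=\gamma_2$; it then handles $\eta$ by a regularity bootstrap (taking $f\in H^{3/2}(\Gamma_1)$ so that $u\in C^1(\Gamma_0)$), Rolle's theorem on the periodic parametrization to kill the integration constant in $(\eta_1-\eta_2)\,\mathrm{d}u/\mathrm{d}s=C$, and finally a Wronskian argument with two linearly independent real-valued boundary data to rule out $\mathrm{d}u/\mathrm{d}s$ vanishing on an arc. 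You instead dispose of $\eta$ first by a dimension count: on an arc where $\eta_1-\eta_2$ is bounded away from zero, every trace in $\mathcal{U}$ solves a nondegenerate second-order ODE in the quasi-derivative/Carath\'eodory sense, hence $\mathcal{U}$ restricted to that arc lies in a finite-dimensional space, contradicting the localized density furnished by Theorem \ref{dense-set}; the $\gamma$ step then follows from density and boundedness of multiplication by $\gamma_1-\gamma_2$. Your argument buys several things: it avoids the $H^{3/2}$ data restriction and the $C^1(\Gamma_0)$ regularity step, it replaces the paper's briefly justified claim that the Wronskian cannot vanish on an open subset (arguably the most delicate point of the paper's proof) by an elementary uniqueness statement for linear ODEs with $L^1$ coefficients, and it nowhere uses real-valuedness in an essential way (replace the final test function $\tilde\gamma$ by $1$ or $\overline{\tilde\gamma}$), so it would in fact cover complex-valued $\eta\in C(\Gamma_0)$, $\gamma\in L^\infty(\Gamma_0)$, strengthening what the paper only obtains for constant $\eta$ in Theorem \ref{unique2}. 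What the paper's route buys is the observation that $\gamma$ is determined independently of any hypothesis on $\eta$ beyond its presence in the divergence-form term, which is the remark the author exploits after Theorem \ref{unique}. One small point to make explicit if you write this up: the distributional identity on $\Gamma_0$ should be derived from the weak form of $\mathscr{B}(u)=0$ used in Section \ref{direct-problem} (test against $\varphi\in H^1(\Gamma_0)$ and subtract), which is exactly what legitimizes your Carath\'eodory reformulation with only $\eta_1-\eta_2$ continuous and $\gamma_1-\gamma_2\in L^\infty$.
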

\begin{proof}
To prove the claim assume that there are two pairs of coefficients $(\eta_j , \gamma_j ) \in C(\Gamma_0) \times L^{\infty} (\Gamma_0)$ that produce the same DtN mapping $\Lambda^{(j)}$, for $j=1,2$. Now let $u^{(j)}$ be the solution to \eqref{defective} with coefficients $(\eta_j , \gamma_j )$. We can conclude that $u^{(1)}$ and $u^{(2)}$ coincide in $D_1$ for all $f \in H^{1/2}(\Gamma_1)$ since their Cauchy data coincides on $\Gamma_1$. This implies that for $u= u^{(1)}=u^{(2)}$ satisfies 
$$ \partial_{\nu} u - \frac{ \text{d} }{\text{d} s} {\eta_1} \frac{\text{d} }{\text{d} s} u+ \gamma_1 u =\partial_{\nu} u -   \frac{ \text{d} }{\text{d} s} {\eta_2} \frac{\text{d} }{\text{d} s} u  + \gamma_2 u  =0 \quad \text{ on } \, \, \Gamma_0. $$
By subtracting the generalized impedance boundary conditions and integrating over $\Gamma_0$ we obtain that  
$$ 0 = \int\limits_{\Gamma_0}  - \frac{ \text{d} }{\text{d} s} {(\eta_1 -\eta_2)} \frac{\text{d} }{\text{d} s} u+ (\gamma_1-\gamma_2) {\color{black}u} \, \text{d} s =  \int\limits_{\Gamma_0} (\gamma_1-\gamma_2) u \, \text{d} s $$
and by Theorem \ref{dense-set} we can conclude that $\gamma_1 = \gamma_2$ a.e. on $\Gamma_0$.

Now assume that $f \in H^{3/2}(\Gamma_1)$, using similar arguments as in Theorem \ref{G} we have that $u \in H^{3/2}(D_1)$ which implies that $\partial_\nu u \in L^2(\Gamma_0)$ and by the generalized impedance boundary condition we can conclude that 
$$\eta_1 \frac{\text{d} u}{\text{d} s}   \in H^1(\Gamma_0) \quad \text{ which implies that } \quad u \in C^1(\Gamma_0).$$   
Notice that since $\gamma_1 = \gamma_2$ subtracting the generalized impedance boundary conditions gives that 
\begin{align*}
\frac{ \text{d} }{\text{d} s} {(\eta_1 -\eta_2)} \frac{\text{d} }{\text{d} s} u  \, =0  \quad \text{ for all } \, \, \, f \in H^{3/2}(\Gamma_1). 
\end{align*}
{\color{black} Where it is sufficient to assume that $f \in H^{3/2}(\Gamma_1)$ since $H^{3/2}(\Gamma_1) \subset H^{1/2}(\Gamma_1)$ and is dense.} This implies that
$$ {(\eta_1 -\eta_2)} \frac{\text{d} u }{\text{d} s}  =C \quad \text{ for all } \, \, \, f \in H^{3/2}(\Gamma_1)$$
where $C$ is some constant. Now let $x(s): [0,\ell] \mapsto \R^2$ be an $\ell$-periodic $C^2$ representation of the curve $\Gamma_0$ where $\ell$ is the length of the curve. Here we identify $H^1(\Gamma_0)$ with the space $H^1_{\text{per}}[0,\ell]$ of $\ell$-periodic functions. It is clear that $u \big( x(0)\big) = u \big( x(\ell)\big)$ for all real-valued $f  \in H^{3/2}(\Gamma_1)$ and therefore by Rolle's Theorem we can conclude  that the tangential derivative for $u$ is zero for some point on the curve which gives that 
$$ {(\eta_1 -\eta_2)} \frac{\text{d} u}{\text{d} s}  =0 \quad \text{ for all real-valued } \, \, \, f \in H^{3/2}(\Gamma_1).$$ 
Without loss of generality assume that there is some $x^* \in \Gamma_0$ such that $(\eta_1- \eta_2)(x^*) >0$. By continuity we have the there exist $\delta >0$ such that $(\eta_1- \eta_2) >0$ for all $x \in \Gamma_0^\delta = \Gamma_0 \cap B(x^*,\delta)$. We can conclude that 
\begin{align} \label{zero-derivative}
\frac{\text{d} u }{\text{d} s}  =0  \quad \text{ on } \Gamma_0^\delta \quad \text{ for all real-valued } \, \, \, f \in H^{3/2}(\Gamma_1).
\end{align}
Now let $f_1$ and $f_2$ be linearly independent real-valued functions in $H^{3/2}(\Gamma_1)$ which implies that the corresponding $u (f_1)$ and $u (f_2)$ in $C^1(\Gamma_0)$ are linearly independent and the Wronskian 
$$W\big(u(f_1) , u(f_2) \big) = u (f_1) \frac{ \text{d} }{\text{d} s}u (f_2)- u(f_2) \frac{ \text{d} }{\text{d} s}u (f_1)$$
can not vanish on any open subset of $\Gamma_0$ due to the generalized impedance boundary condition. By \eqref{zero-derivative} we have that $W\big(u(f_1) , u(f_2) \big) =0$ on $\Gamma_0^\delta$, which contradicts the linear independence of $f_1$ and $f_2$ proving the claim.  
\end{proof}

Notice that in Theorem \ref{unique} to recover both impedance parameters we require the impedance parameters to be real-valued. It is clear from the proof of Theorem \ref{unique} that one can uniquely determine the parameter $\gamma$ with complex valued coefficients. Also it is clear that the arguments in Theorem \ref{dense-set}  {\color{black} hold} whenever the direct problem \eqref{defective} is well-posed. Therefore, one can easily extend the proof of Theorem \ref{unique} where the Laplacian is replaced with any symmetric elliptic partial differential operator with sufficiently smooth real-valued coefficients. 

For the case of uniqueness for complex valued $\eta$ and $\gamma$ then the electrostatic potential $u$ is complex-valued. {\color{black} Now, assuming that} $\eta$ is a (complex-valued) constant and proceed by way of contradiction {\color{black} then} there are two sets of coefficients that produce the same DtN mapping $\Lambda$. {\color{black} We can then} conclude just as in Theorem \ref{unique} that the {\color{black}two} electrostatic potentials are equal and therefore
$$ \frac{\text{d} u }{\text{d} s}  =C \quad \text{ for all } \, \, \, f \in H^{3/2}(\Gamma_1)$$
where $C$ is some (complex-valued) constant. We again identify $H^1(\Gamma_0)$ with the space $H^1_{\text{per}}[0,\ell]$ of $\ell$-periodic functions where we note that $u \big( x(0)\big) = u \big( x(\ell)\big)$. Now define the real-valued functions  
$$ F(s) = \text{Re} \Big\{ u \big( x(s) \big) \Big\} \quad \text{and} \quad G(s) = \text{Im} \Big\{ u \big( x(s) \big) \Big\}$$
for $0 \leq s \leq \ell$. By appealing to Rolle's Theorem for $F(s)$ and $G(s)$ we have that there {\color{black}is} at least one point where the real and imaginary parts of the tangential derivative of $u$ is equal zero which gives that the tangential derivative is zero for all $x$ on $\Gamma_0$. Now proceeding as in Theorem \ref{unique} we then have the following result. 

\begin{theorem} \label{unique2} 
Assume that $\eta$ and $\gamma$ satisfy the assumptions of Section \ref{direct-problem} and $\eta$ constant, then the mapping 
$(\eta , \gamma)  \longmapsto \Lambda$ is injective from $\C \times L^{\infty} (\Gamma_0)$ to $\mathcal{L} \big( H^{1/2}(\Gamma_1) \, ,\,  H^{-1/2}(\Gamma_1) \big)$.   
\end{theorem}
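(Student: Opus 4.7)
The plan is to follow the scheme of Theorem \ref{unique} verbatim up to the point where the real/complex structure of $u$ matters, then substitute Rolle's Theorem applied to real and imaginary parts in place of the single real Rolle argument. Concretely, I would argue by contradiction: suppose two pairs $(\eta_j,\gamma_j)$ with $\eta_j \in \C$ constant produce the same DtN operator $\Lambda$. Because the Cauchy data of the corresponding potentials $u^{(1)},u^{(2)}$ match on $\Gamma_1$ and both are harmonic in $D_1$, Holmgren's Theorem (as in the proof of Theorem \ref{unique}) gives $u:=u^{(1)}=u^{(2)}$ in $D_1$ for every $f \in H^{1/2}(\Gamma_1)$.

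Next I would subtract the two generalized impedance boundary conditions on $\Gamma_0$ to get
$$ -\frac{\text{d}}{\text{d}s}(\eta_1-\eta_2)\frac{\text{d}u}{\text{d}s} + (\gamma_1-\gamma_2)u = 0 \quad \text{on } \Gamma_0.$$
Since $\eta_1-\eta_2$ is a constant, integrating over the closed curve $\Gamma_0$ kills the tangential-derivative term (it is a total derivative of a periodic function), leaving $\int_{\Gamma_0}(\gamma_1-\gamma_2)u\,\text{d}s=0$ for every admissible $f$. By the density of traces established in Theorem \ref{dense-set}, this forces $\gamma_1=\gamma_2$ almost everywhere on $\Gamma_0$. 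With the zeroth-order terms eliminated, the subtracted condition collapses to $(\eta_1-\eta_2)\frac{\text{d}^2 u}{\text{d}s^2}=0$, or equivalently, integrating once in arc length, $(\eta_1-\eta_2)\frac{\text{d}u}{\text{d}s}=C$ for some complex constant $C$ (depending on $f$).

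The main obstacle is now the step where Theorem \ref{unique} used Rolle's Theorem to show $C=0$; that argument relied on $u$ being real-valued so that $u(x(0))=u(x(\ell))$ implies a point with vanishing tangential derivative. To overcome this, I would split $u\circ x$ into its real and imaginary parts $F(s)=\text{Re}\,u(x(s))$ and $G(s)=\text{Im}\,u(x(s))$, both $\ell$-periodic and $C^1$ by the regularity boost argument used in Theorem \ref{unique} (interior $H^{3/2}$ regularity for $f\in H^{3/2}(\Gamma_1)$, which is dense in $H^{1/2}(\Gamma_1)$, promotes $u|_{\Gamma_0}$ to $C^1$). Applying Rolle's Theorem to $F$ and $G$ separately yields points $s_1,s_2\in[0,\ell]$ with $F'(s_1)=0$ and $G'(s_2)=0$. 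Since $(\eta_1-\eta_2)\frac{\text{d}u}{\text{d}s}$ is the constant $C$, this constant must have both real part zero and imaginary part zero, hence $C=0$. Therefore $(\eta_1-\eta_2)\frac{\text{d}u}{\text{d}s}\equiv 0$ on all of $\Gamma_0$.

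Finally, assuming for contradiction $\eta_1\neq\eta_2$, we conclude $\frac{\text{d}u}{\text{d}s}=0$ on $\Gamma_0$ for every $f\in H^{3/2}(\Gamma_1)$. I would then close the argument with the Wronskian step from Theorem \ref{unique}: pick two linearly independent $f_1,f_2\in H^{3/2}(\Gamma_1)$ (now complex-valued is fine) and observe that the generalized impedance condition forces the Wronskian $W(u(f_1),u(f_2))$ to be non-trivial on any open subset of $\Gamma_0$, yet the vanishing of both tangential derivatives makes it identically zero—a contradiction. This delivers $\eta_1=\eta_2$ and completes the proof.
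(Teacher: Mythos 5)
Your proposal is correct and is essentially the paper's own argument: reduce to the real-valued proof of Theorem \ref{unique}, recover $\gamma$ via the density of traces (Theorem \ref{dense-set}), arrive at $(\eta_1-\eta_2)\,\frac{\mathrm{d}u}{\mathrm{d}s}=C$ with $\eta_1-\eta_2$ constant, apply Rolle's Theorem separately to $F(s)=\mathrm{Re}\,u(x(s))$ and $G(s)=\mathrm{Im}\,u(x(s))$, and finish with the Wronskian contradiction. The only cosmetic adjustment is to invoke the contradiction hypothesis $\eta_1\neq\eta_2$ \emph{before} the Rolle step (so that $\frac{\mathrm{d}u}{\mathrm{d}s}$ itself is the constant $C/(\eta_1-\eta_2)$, whose real and imaginary parts, vanishing at one point each, must vanish identically), since from $F'(s_1)=0$ and $G'(s_2)=0$ alone one cannot directly read off that $C$ has zero real and imaginary parts when $\eta_1-\eta_2$ is complex.
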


We now derive a linear algorithm to recover $\eta$ and $\gamma$ given the DtN mapping as well as the inner boundary $\Gamma_0$. Therefore, notice that given any voltage $f \in H^{1/2}(\Gamma_1)$ we can compute the corresponding current $\Lambda f$ on $\Gamma_1$. This implies that we know the Cauchy data of the harmonic function $u$ on the outer boundary. Since $\Gamma_0$ is assumed to be known we can use a  data completion algorithm to recover $u (f)$ and $\partial_\nu u (f)$ on the inner boundary.  Recently in \cite{data-completion} and \cite{Harris-Rundell}  data completion algorithms are derived using boundary integral equations. This implies that the mapping 
$$  \big( f , \Lambda f \big)\big|_{\Gamma_1} \,\,  {\longmapsto} \,\, \Big( u (f) \, ,\, \partial_\nu u (f) \Big) \big|_{\Gamma_0} $$
is known. In order to determine the coefficients we recall that $\mathscr{B}(u)=0$ on $\Gamma_0$ for any $f$. Multiplying the generalized impedance condition by $\overline{u}$ gives that 
\begin{align}
- \int\limits_{\Gamma_0} \overline{u} \partial_\nu u  \, \text{d} s  = \int\limits_{\Gamma_0}  {\eta} \left| \frac{\text{d} u}{\text{d} s}  \right|^2 + \gamma |u|^2 \, \text{d} s  \quad \text{for all } \,\,\, f \in H^{1/2}(\Gamma_1). \label{coeff-equ}
\end{align}
Now assume that 
$${\color{black}\eta\big( x(s) \big)} \approx \sum\limits_{n=1}^{N} \eta_n \Psi^{(1)}_n \big( x(s) \big) \quad \text{ and } \quad {\color{black}\gamma\big( x(s) \big)} \approx \sum\limits_{n=1}^{N} \gamma_n \Psi^{(2)}_n \big( x(s) \big) $$
where $\Psi^{(j)}_n$ are some given  linearly independent functions on $\Gamma_0$ for $j=1,2$. Notice that by taking $f_m$ for $m= 1, \cdots ,M$ then \eqref{coeff-equ} gives a $2N \times M$ linear system of equations to recover $\eta$ and $\gamma$. Here we assume that ${\text{d}  u}/{\text{d} s}$ can be recovered from the values of $u$ on $\Gamma_0$ by a finite difference approximation.  

 {\bf A numerical example for constant coefficients:} Just as in the previous section we will provide a simple example in two dimensions to give proof of concept. To this end, we will use \eqref{coeff-equ} to recover constant coefficients from the electrostatic data for the annulus. Here we consider the same numerical example as in the previous section and use the same notation. In order to uses \eqref{coeff-equ} one needs to recover the electrostatic potential on the known/recovered inner boundary using data completion. Using the series representation for $u(r,\theta)$ in \eqref{series-solution} as well as the fact that 
$$u(1,\theta) = f(\theta) \quad \text{ and } \quad \partial_r u(1,\theta) = g(\theta)$$ 
are known we recover the sequences $a_n$ and $b_n$. Applying the boundary conditions where $r=1$ we can conclude that 
\begin{align}
a_n= \frac{|n| f_n +g_n}{2|n|} \quad \text{ and } \quad b_n= \frac{|n| f_n - g_n}{2|n|}  \quad \text{ and } \quad n \neq 0 \label{a-and-b}
\end{align}
where for $n=0$, we have that $a_0=f_0$ and $b_0=g_0$. Here $f_n$ and $g_n$ denotes the Fourier coefficients for the voltage and current measurements. Therefore, using the sequences $a_n$ and $b_n$ along with \eqref{series-solution} we know the electrostatic potential $u(r,\theta)$ in the annulus. 

In order to use \eqref{coeff-equ} to recover constant coefficients we need two pairs of voltage and current measurements. We take $f(\theta)=\text{e}^{ \text{i} n \theta}$ for $n \neq 0$ and $g(\theta)=\partial_r u(1,\theta)$ where the current is given by \eqref{current-def}.  We add random noise to the `measured' current $g$ such that $g^{\delta} (\theta) = g(\theta)+\delta \text{e}^{ \text{i} p \theta}$ where $p \in \N$. To recover the electrostatic potential we use equations \eqref{series-solution} and \eqref{a-and-b} where the Fourier coefficients are computed with Matlab's build in numerical integrator. In our examples we truncate the series solution in \eqref{series-solution} at $|n|=10$. Since we now have recovered $u(r,\theta)$ we solve a 2-by-2 system of equations given by \eqref{coeff-equ} to recover the constant coefficients $\eta$ and $\gamma$. The linear system is solved using the backslash command since the matrix is small a well-conditioned. In Table \ref{tab1} are our results for reconstruction of $\eta$ and $\gamma$ where $\Gamma_0$ is the boundary of the ball with radius $1/2$ where we assume that $\Gamma_0$ is known a prior (i.e. by reconstruction). We see in both cases that the reconstruction of $\eta$ is better than $\gamma$, further numerical study is needed to determine why this occurs.  
\begin{center}
\begin{table}[ht!]%
\centering
\caption{Reconstruction of constant coefficients for $n=1,2$.\label{tab1}}%
\begin{tabular}{c|c|c|c}
\textbf{Exact} & \textbf{Recon: ($\delta=0.01$ , $p=1$)}  &  \textbf{Recon: ($\delta=0.04$ , $p=4$)}   \\
\hline
$\eta= 5+2\text{i}$ & $\eta= 5.0485 + 1.9044\text{i}$  & $\eta= 5.4441 + 1.8730\text{i}$      \\
\hline
$\gamma= 10+\text{i}$ & $\gamma= 10.2582 + 0.2951\text{i}$  & $\gamma= 8.5754 + 0.1658\text{i}$   \\
\hline
\end{tabular}
\caption{ Reconstruction of constant $\eta$ and $\gamma$ from the electrostatic Cauchy data for the inclusion given by a ball with radius $1/2$}.  
\end{table}
\end{center}

%%%%%%%%%%%%%%%%%%%%%%%%%%%%%%%%%%%%%%%%%%%%%%%%%%%%%%%%%%%%%%%%%%%%%%%%
\section{Conclusion and Summary }\label{end}
{\color{black} In this paper, we have derived a Sampling Method for recovering an inclusion as well as a linear system of equations for recovering the generalized impedance boundary condition from electrostatic data. Since we use a Sampling Method instead of an iterative method we do not need any a prior knowledge about the inclusion or boundary condition. This method is similar in flavor to the Factorization Method computationally but the operator considered is not factorized to obtain the results. We have also reduce the regularity assumptions from previous works where we pay the price of requiring the full knowledge of the DtN mapping. There are some simple numerical examples presented in two dimensions but an in-depth study of a more robust computational method to solve the inverse shape and impedance problems are needed. Lastly, extending the uniqueness results for the impedance coefficients to three dimensions is also needed. }

%%%%%%%%%%%%%%%%%%%%%%%%%%%%%%%%%%%%%%%%%%%%%%%%%%%%%%%%%%%%%%%%%

\end{document}